\newcommand{\plectic}[0]{\text{\textmarried}}
\DeclareFontFamily{U}{wncy}{}
    \DeclareFontShape{U}{wncy}{m}{n}{<->wncyr10}{}
    \DeclareSymbolFont{mcy}{U}{wncy}{m}{n}
    \DeclareMathSymbol{\Sh}{\mathord}{mcy}{"58}
\newcommand{\bb}{\mathbb}
\newcommand{\mbf}{\mathbf}
\newcommand{\scr}{\mathscr}
\newcommand{\mrm}{\mathrm}
\newcommand*{\bfcdot}{\scalebox{0.6}{$\bullet$}}
\newcommand{\bs}{\boldsymbol}
\newcommand{\lmfdb}[5]{\href{https://www.lmfdb.org/EllipticCurve/2.2.#1.1/#2.#3/#4/#5}{{#2}.{#3}-{#4}{#5}}}
\theoremstyle{definition}
\newtheorem{theorem}{Theorem}[section]
\newtheorem{lemma}[theorem]{Lemma}
\newtheorem{proposition}[theorem]{Proposition}
\newtheorem{corollary}[theorem]{Corollary}
\newtheorem{definition}[theorem]{Definition}
\newtheorem{conjecture}[theorem]{Conjecture}
\newtheorem{problem}[theorem]{Problem}
\theoremstyle{remark}
\newtheorem{remark}[theorem]{Remark}
\def\Xint#1{\mathchoice
{\XXint\displaystyle\textstyle{#1}}%
{\XXint\textstyle\scriptstyle{#1}}%
{\XXint\scriptstyle\scriptscriptstyle{#1}}%
{\XXint\scriptscriptstyle\scriptscriptstyle{#1}}%
\!\int}
\def\XXint#1#2#3{{\setbox0=\hbox{$#1{#2#3}{\int}$ }
\vcenter{\hbox{$#2#3$ }}\kern-.585\wd0}}
\def\mint{\Xint\times}
\begin{document}

\title{Plectic $p$-adic invariants}

\author{Michele Fornea}
\email{mfornea@math.columbia.edu}
\address{Columbia University, New York, USA.}
\author{Xavier Guitart}
\email{xevi.guitart@gmail.com}
\address{Universitat de Barcelona, Barcelona, Catalonia.}
\author{Marc Masdeu}
\email{masdeu@mat.uab.cat}
\address{Universitat Aut\`onoma de Barcelona, Barcelona, Catalonia.}

\classification{11F41, 11G05, 11G40, 11Y99} 

\begin{abstract}
For modular elliptic curves over number fields of narrow class number one, and with multiplicative reduction at a collection of $p$-adic primes, we define new $p$-adic invariants.  Inspired by Nekov\'a$\check{\text{r}}$ and Scholl's plectic conjectures, we believe these invariants control the Mordell--Weil group of higher rank elliptic curves and we support our expectations with numerical experiments.
\end{abstract}

\maketitle


\section{Introduction}
In the late 1960s, Birch and Swinnerton-Dyer made a discovery that profoundly changed the study of the arithmetic of elliptic curves. Mordell had already shown in 1922 that the set of rational solutions $A(\bb{Q})$ of an elliptic curve is always a finitely generated abelian group. However, the rank of its free part, called the algebraic rank $r_\mrm{alg}(A/\bb{Q})$, was proving to be a subtle invariant difficult to compute. Birch and Swinnerton-Dyer's insight was to use local, and easy to compute, information about an elliptic curve to reconstruct its algebraic rank. Crucially informed by numerical calculations, they ended up formulating a momentous conjecture:

 for all but finitely many primes $\ell$, it is possible to reduce the Weierstrass equation of $A_{/\bb{Q}}$ modulo $\ell$ to obtain an elliptic curve $\bar{A}_{/\bb{F}_\ell}$ over a finite field. Every such curve is much simpler than the original one -- for instance, in the 1980s Schoof discovered a polynomial time algorithm\footnote{Birch and Swinnerton-Dyer performed their computations on CM elliptic curves whose number of points mod $\ell$ can be quickly computed using Hecke characters.} \cite{Schoof} that quickly computes the number $N_\ell(A):=\lvert\bar{A}(\bb{F}_\ell)\rvert$ of points modulo $\ell$. As there are natural reduction maps $A(\bb{Q})\to \bar{A}(\bb{F}_\ell)$, one could heuristically expect a large algebraic rank to force the sets $\bar{A}(\bb{F}_\ell)$ to also be large on average. Birch and Swinnerton-Dyer turned this heuristic into a quantitative mathematical statement and successfully tested it  on a computer. They noticed that a normalized product of $N_\ell(A)$'s grows as the $r_\mrm{alg}(A/\bb{Q})$-th power of the logarithm function
\begin{equation}\label{BSD}
\prod_{\ell\le X}\frac{N_\ell(A)}{\ell}\quad\overset{?}{\mbox{\large$\sim$}}_{\mbox{\tiny $+\infty$}}\quad\big(\log X\big)^{r_\mrm{alg}(A/\bb{Q})}.
\end{equation}
The expectation that every elliptic curve over a number field satisfies an appropriate generalization of \eqref{BSD} became known as the BSD conjecture\footnote{Goldfeld proved that the asymptotic conjecture of Birch and Swinnerton-Dyer implies the modern formulation of the BSD conjecture in terms of order of vanishing of $L$-functions  \cite{GoldfeldBSD}.}. The intrinsic appeal of the problem has inspired the research of many mathematicians who unearthed surprising connections with different parts of mathematics. 
Among the landmarks in the field one finds (in publishing order) the works of Coates--Wiles \cite{CoatesWiles}, Gross--Zagier \cite{GZformula}, Kolyvagin \cite{Koly}, Zhang \cite{Heights}, Skinner--Urban \cite{IwasawaSU}, Darmon--Rotger \cite{DR2} and Skinner \cite{Skinner}. 
It is interesting to note that all these works are limited to elliptic curves of small rank, the ultimate reason for this arguably being that the only known general approach to prove that the algebraic rank is at least $r$ is to produce $r$ linearly independent points. Therefore, the need becomes apparent for a \emph{systematic} strategy to construct linear independent points on elliptic curves. In order to formulate such task precisely, we recall that when studying an elliptic curve $A$ over a number field $F$, experience has shown that is beneficial to consider an auxiliary quadratic extension $E/F$.
\begin{problem}\label{problem}
Construct an element $P\in \wedge^r A(E)$ such that 
\[
P\ \ \text{non-torsion}\quad\iff\quad r_\mrm{alg}(A/E)=r.
\]
\end{problem}
\noindent For $r=1$, the constructions of Heegner and Stark--Heegner points give a satisfactory answer to Problem \ref{problem} -- with a caveat. While Heegner points are defined for quadratic CM extensions and are known to be algebraic by the theory of complex multiplication, their generalization to arbitrary quadratic extensions $E/F$, called Stark--Heegner points, are only conjecturally algebraic in general. Stark--Heegner points were first defined for real quadratic fields in \cite{IntegrationDarmon} and later generalized to arbitrary extensions in (\cite{Greenberg}, \cite{ArbitraryDarmon}, \cite{AutomorphicDarmon}). While their definition is inherently local and relies on analytic methods, the theoretical and numerical evidence is so overwhelming that it is commonly accepted that Heegner and Stark--Heegner points completely control the Mordell--Weil group of elliptic curves of rank one.

In this paper we propose a generalization of the $p$-adic construction of Stark--Heegner points inspired by Nekov\'a$\check{\text{r}}$ and Scholl's plectic conjectures (\cite{PlecticNS}, \cite{NekRubinfest}). The outcome is a conjectural answer to Problem \ref{problem} for arbitrary $r\ge2$, under some assumptions on the elliptic curve $A_{/F}$ and the quadratic extension $E/F$. Throughout the text we enforce some \emph{simplifying hypotheses} to tailor the exposition to numerical verication: we only consider non-CM quadratic extensions $E/F$ where both fields have narrow class number one. We refer to \cite{plecticHeegner} for the construction of plectic invariants in the general case, and for the definition of certain refined invariants that may be called \emph{plectic Stark--Heegner points}. Finally, we direct the adventurous reader to  \cite{PlecticJacobians} for the speculative  framework of \emph{plectic Jacobians} recasting the construction of plectic Heegner points in geometric terms.  

We continue the introduction by outlining the construction of plectic $p$-adic invariants and by formulating precise conjectures regarding their significance for the arithmetic of higher rank elliptic curves. Finally, we describe our numerical experiments.

\subsection{Overview} 
 Let $F$ be a number field with $t$ real places, $s$ complex places, and narrow class number one. We consider $A_{/F}$ a modular elliptic curve of conductor $\frak{f}_A$, and $E/F$ a non-CM quadratic extension of narrow class number one where $\frak{f}_A$ is unramified. 
We aim to define an invariant whose non-triviality implies that the Mordell--Weil group $A(E)$ has rank $r$. To this end, we fix a rational prime $p$ unramified in $F$, and a set  $S=\{\frak{p}_1,\dots,\frak{p}_r\}$ of $r$ distinct $p$-adic $\cal{O}_F$-prime ideals all inert in $E$. Denote by $\widehat{E}_{\frak{p}}^\times$ the free part of the $p$-adic completion of $E_\frak{p}^\times$; the \emph{plectic $p$-adic invariant} associated to a triple $(A_{/F},E,S)$,  satisfying certain hypotheses described below, is an element  in the tensor product $\widehat{E}_{S,\otimes}^\times:=\otimes_{\frak{p}\in S}\widehat{E}_{\frak{p}}^\times$ of $\bb{Z}_p$-modules
\[
\mrm{Q}_{A} \in \widehat{E}_{S,\otimes}^\times.
\]
The construction follows closely the strategy of \cite{Greenberg}, \cite{ArbitraryDarmon}. However, while those approaches are modeled on the $p$-adic uniformization of Shimura curves, ours is inspired by the $p$-adic uniformization of higher dimensional quaternionic Shimura varieties by products of $p$-adic upper-half planes \cite{Varshavsky}. 
 
\noindent We continue by explaining the conditions that we need to impose on the triple $(A_{/F},E,S)$.  Suppose that every $\frak{p}\in S$ divides \emph{exactly} the conductor $\frak{f}_A$, and that we can write
\[
\frak{f}_A=p_S\cdot\frak{n}^{\mbox{\tiny $+$}}\cdot\frak{n}^{\mbox{\tiny $-$}},
\]
where $p_S=\prod_{\frak{p}\in S}\frak{p}$ and $\frak{n}^{\mbox{\tiny $+$}}$ is the product of \emph{all} prime divisors of $\frak{f}_A$ that split in $E$. Further, we require $\frak{n}^{\mbox{\tiny $-$}}$ to be square-free and denote by $\omega(\frak{n}^{\mbox{\tiny $-$}})$ the number of its prime factors. If we denote by $\infty_1,\dots,\infty_{t}$ the real places of $F$ ordered such that the first $n$ are precisely those that split in $E$, then the root number of $A_{/E}$ is conjecturally computed by
\[
\varepsilon(A/E)=(-1)^{r+\omega(\frak{n}^{\mbox{\tiny $-$}})+(t-n)}.
\]
The parity conjecture suggests us to impose the congruence condition $\omega(\frak{n}^{\mbox{\tiny $-$}})\equiv (t-n)\pmod{2}$.
Under these hypotheses there is a unique quaternion algebra $B/F$ ramified precisely at 
\[
\{ \frak{q}\mid \frak{n}^{\mbox{\tiny $-$}}\}\cup\{\infty_{n+1},\dots,\infty_{t}\}
\]
and admitting an embedding $E\hookrightarrow B$. Moreover,  $B/F$ is \emph{not} totally definite because  the quadratic extension $E/F$ is \emph{not} CM.  The construction of the invariants is naturally divided into three steps.

\subsubsection{Construction.}
Let $F_S$ and $B_S$ denote the $S$-adic completions of $F$ and $B$ respectively. The choice of an isomorphism $\iota\colon B_S^\times\overset{\sim}{\to}\mrm{GL}_2(F_S)$ and the Jacquet--Langlands correspondence allow us to transform the cohomological eigenform of weight $2$ associated to $A_{/F}$ by modularity into a measure-valued eigenclass
	\[
	\mbf{c}_A\in \mrm{H}^{n+s}\big(\Gamma,\scr{M}_\plectic(\bb{P}^1(F_S),\bb{Z})\big)_\pi.
	\]
	The cohomology is computed with respect to the $S$-arithmetic subgroup $ \Gamma:=R_1^\times/\{\pm1\}$ of $B^\times/F^\times$ arising from an $\cal{O}_{F}[\mbox{\small $\tiny S^{-1}$}]$-Eichler order $R$ of level $\frak{n}^{\mbox{\tiny $+$}}$, while the coefficients $\scr{M}_\plectic(\bb{P}^1(F_S),\bb{Z})$  are $\bb{Z}$-valued measures $\mu$ on $\bb{P}^1(F_S)=\prod_{\frak{p}\in S}\bb{P}^1(F_\frak{p})$ satisfying the following property:
the value $\mu(U)$ equals zero if the open compact subset $U\subseteq \bb{P}^1(F_S)$ can be written as
	\[
	U=\bb{P}^1(F_\frak{p})\times V\quad\text{for}\quad V\subseteq\bb{P}^1(F_{S\setminus\{\frak{p}\}})\quad \text{compact open}.
	\]
 The second step comprises an extension of the theory of $p$-adic multiplicative integrals. The characterizing property of the measures in $\scr{M}_\plectic(\bb{P}^1(F_S),\bb{Z})$ suggests that it should be possible to define a meaningful integration pairing with a certain subgroup of zero-cycles on $\cal{H}_S=\prod_{\frak{p}\in S}\cal{H}_\frak{p}$ where $\cal{H}_\frak{p}=\bb{P}^1(E_\frak{p})\setminus \bb{P}^1(F_\frak{p})$. Indeed, any measure $\mu\in \scr{M}_\plectic(\bb{P}^1(F_S),\bb{Z})$ can be integrated against a zero-cycle of the form $C=\otimes_{\frak{p}\in S}([x_\frak{p}]-[y_\frak{p}])$ by computing a limit of Riemann products
\[
\mint_{\bb{P}^1(F_S)}\ \bigotimes_{\frak{p}\in S}\bigg(\frac{t_\frak{p}-x_\frak{p}}{t_\frak{p}-y_\frak{p}}\bigg)\ \mrm{d}\mu(t)\ \in\ \widehat{E}_{S,\otimes}^\times.
\]
Therefore, it makes sense to define the group $\bb{Z}_\plectic[\cal{H}_S]$ of plectic zero-cycles on $\cal{H}_S$  as the tensor product $\otimes_{\frak{p}\in S}\bb{Z}[\cal{H}_\frak{p}]^0$ of zero-cycles of degree zero. We obtain a $\mrm{PGL}_2(F_S)$-equivariant pairing
 \[ \mint\colon\scr{M}_\plectic\big(\bb{P}^1(F_S),\bb{Z}\big)\times \bb{Z}_\plectic[\cal{H}_S]\longrightarrow \widehat{E}_{S,\otimes}^\times
 \]
giving rise to a cap product pairing $\cap\colon \mrm{H}^{n+s}\big(\Gamma, \scr{M}_\plectic(\bb{P}^1(F_S),\bb{Z})\big)\times \mrm{H}_{n+s}\big(\Gamma,\bb{Z}_\plectic[\cal{H}_S]\big) \longrightarrow \widehat{E}_{S,\otimes}^\times$.

\noindent The final step consists in defining a homology class $\Delta_E\in \mrm{H}_{n+s}\big(\Gamma,\bb{Z}_\plectic[\cal{H}_S]\big)$ associated to the quadratic extension $E/F$. As $E$ has narrow class number one, the class $\Delta_E$ is essentially unique.
\begin{definition}
	The \emph{plectic $p$-adic invariant} associated to the triple $(A_{/F},E,S)$ is given by
		\[
		\mrm{Q}_{A}:=\mbf{c}_A\cap\Delta_E\ \in\ \widehat{E}_{S,\otimes}^\times.
		\]
\end{definition}
\noindent Plectic invariants encode interesting arithmetic information: they appear as values of higher derivatives of anticyclotomic $p$-adic $L$-functions (\cite{plecticHeegner}, Theorem A) and they can be unconditionally defined in great generality. Furthermore, we can use  Tate's uniformizations $E_\frak{p}^{\times}\to A(E_\frak{p})$ at every prime $\frak{p}\in S$ to conjecturally relate them to global points of the elliptic curve. Let $\widehat{A}(E_\frak{p})$ denote the free part of the $p$-adic completion of $A(E_\frak{p})$ and let $\widehat{A}(E_{S}):=\bigotimes_{\frak{p}\in S}\widehat{A}(E_{\frak{p}})$ denote the tensor product of $\bb{Z}_p$-modules. Then, the uniformization $\phi_{\mbox{\tiny $\mrm{Tate}$}} \colon  \widehat{E}_{S,\otimes}^{\times}\rightarrow \widehat{A}(E_{S})$ allows us to consider the \emph{plectic point}
\[
\phi_{\mbox{\tiny $\mrm{Tate}$}}(\mrm{Q}_{A})\ \in\ \widehat{A}(E_{S})
\]
whose expected algebraicity and relevance for the Mordell--Weil group $A(E)$ are presented next.

\subsection{Conjectures}\label{conjectures}
Recall that both fields, $F$ and $E$, have narrow class number one, that we fixed a rational prime $p$ and a set $S=\{\frak{p}_1,\dots,\frak{p}_r\}$ of $p$-adic $\cal{O}_F$-prime ideals inert in $E$. We also made assumptions on the conductor $\frak{f}_A$ of the modular elliptic curve $A_{/F}$
so that we can expect the congruence $r_\mrm{alg}(A/E)\equiv_2r$ to be satisfied.  Now, for every $\frak{p}\in S$ we fix $\iota_\frak{p}\colon E\hookrightarrow E_\frak{p}$ an embedding of $E$ in its completion $E_\frak{p}$, and define a homomorphism $\det\colon \wedge^{r} A(E)\to \widehat{A}(E_S)$
by setting
\[
\det\big(P_1\wedge\dots\wedge P_r\big)=\det \begin{pmatrix}
		\iota_{\frak{p}_1}(P_1)&\dots& \iota_{\frak{p}_r}(P_1)\\
	&\dots&\\
	\iota_{\frak{p}_{1}}(P_r)&\dots& \iota_{\frak{p}_r}(P_r)
	\end{pmatrix},
\]
where we take tensor products whenever the formula computing the determinant would require multiplication of entries of the matrix. Interestingly, we can further pin down the position of $\phi_{\mbox{\tiny $\mrm{Tate}$}}(\mrm{Q}_{A})$ inside $\widehat{A}(E_S)$ using as clue the definition of the class $\Delta_E$. Partition $S=S^+\cup S^-$ by declaring that the subset $S^+\subseteq S$ contains \emph{all} the primes in $S$ of split multiplicative reduction for $A_{/F}$. Further, consider the map
\[
\pi_S\colon\widehat{A}(E_S)\longrightarrow \widehat{A}(E_S),\qquad \pi_S(R)=\bigg(\prod_{\frak{p}\in S^+}(1-\sigma_\frak{p}^*) \prod_{\frak{p}\in S^-}(1+\sigma_\frak{p}^*)\bigg)R,
\]
where the ``partial Frobenius'' $\sigma_\frak{p}\in\text{Gal}(E_\frak{p}/F_\frak{p})$ acts on $\widehat{A}(E_S)$ through its natural action on the $\frak{p}$-th component. Then, after defining 
\[
\mrm{det}_S:=\pi_S\circ\det,
\]
the meaning of algebraicity for the element $\phi_{\mbox{\tiny $\mrm{Tate}$}}(\mrm{Q}_{A})$ takes the following form.
\begin{conjecture}(Algebraicity)\label{algebraicity}
If $r_\mrm{alg}(A/E)\ge r$, there exists an element $w_A\in \wedge^r A(E)$ s.t.
	\[
	\phi_{\mbox{\tiny $\mrm{Tate}$}}(\mrm{Q}_{A})=\mrm{det}_S(w_A).
	\]	
\end{conjecture}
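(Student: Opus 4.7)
The plan is to exploit the interpretation of $\mrm{Q}_A$ as a value of a higher derivative of an anticyclotomic $p$-adic $L$-function (as stated just before the conjecture, this follows from Theorem A of \cite{plecticHeegner}) and then to establish a plectic analog of the Gross--Zagier formula that expresses such a derivative value in terms of the local coordinates at $S$ of global points on $A_{/E}$. Algebraicity of $\phi_{\mbox{\tiny $\mrm{Tate}$}}(\mrm{Q}_A)$ would then follow from algebraicity of those local coordinates, and the precise shape $\mrm{det}_S(w_A)$ would be extracted from the Euler factors removed in the interpolation.

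First I would attempt to reduce to the base case $r=1$, which is essentially the existing Stark--Heegner conjectures of \cite{Greenberg,ArbitraryDarmon,AutomorphicDarmon}. The technical heart is to factor $\mrm{Q}_A = \mbf{c}_A \cap \Delta_E$ compatibly with the tensor decomposition $\widehat{E}_{S,\otimes}^\times = \bigotimes_{\frak{p}\in S}\widehat{E}_\frak{p}^\times$. The vanishing property imposed on measures in $\scr{M}_\plectic(\bb{P}^1(F_S),\bb{Z})$ on ``cylindrical'' open sets is exactly what makes the integral $\mint$ behave multiplicatively across the $r$ factors; coupled with the pure tensor structure of $\Delta_E$ at the homological level, this suggests that on the rank-$r$ subspace cut out by the free part of $A(E)$ one can write $\mrm{Q}_A$ as a sum of $r$-fold tensor products of rank-one Stark--Heegner-type invariants. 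Each tensor slot then maps via Tate's uniformization to $\widehat{A}(E_\frak{p})$, producing a candidate $w_A\in\wedge^rA(E)$.

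The next step is to verify that the projection $\pi_S$ matches exactly the determinantal combination of local images. The operators $(1-\sigma_\frak{p}^*)$ and $(1+\sigma_\frak{p}^*)$ encode the sign of the Frobenius action on the component group of the N\'eron model at $\frak{p}$: for split multiplicative reduction the Atkin--Lehner sign forces cancellation captured by $(1-\sigma_\frak{p}^*)$, and dually for non-split reduction. A careful computation, running parallel to the root-number calculation $\varepsilon(A/E)=(-1)^{r+\omega(\frak{n}^{\mbox{\tiny $-$}})+(t-n)}$ from the introduction, should show that $\pi_S$ projects onto precisely the $\mathrm{Gal}(E_\frak{p}/F_\frak{p})^S$-isotypic component in which $\mrm{det}(P_1\wedge\dots\wedge P_r)$ lives.

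The main obstacle is algebraicity itself. Even for $r=1$ the Stark--Heegner conjecture remains open outside the CM case, so any strategy proceeding by reduction to rank one bootstraps on an unresolved problem. Beyond that, the higher-rank setting introduces a genuine new difficulty: one has no analog of the Shimura reciprocity law (which handles $r=1$ in the CM case via class field theory) that would directly furnish tensor products of global points from the $p$-adic data. A realistic unconditional attack would plausibly need either a geometric incarnation in the sense of \cite{PlecticJacobians}, producing the algebraic elements directly from a plectic Jacobian, or a Kolyvagin/Euler system type argument bounding the cokernel of $\mrm{det}_S$ inside $\widehat{A}(E_S)$. For this reason the paper pursues, and this is the correct short-term strategy, extensive numerical verification rather than a full proof.
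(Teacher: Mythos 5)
The statement you were asked about is a \emph{conjecture}: the paper offers no proof of it, and does not claim one. Its evidence is the interpretation of $\mrm{Q}_A$ as an $r$-th derivative of the anticyclotomic $p$-adic $L$-function of \cite{plecticHeegner}, the analogy with the rank-one Stark--Heegner constructions, and the numerical experiments of Sections \ref{Numerical evidence} and \ref{Lastsection}. Your proposal is therefore not in competition with a proof in the paper, and you are right to concede at the end that what you outline is a programme rather than an argument. Judged as a programme, however, its central technical step is not merely open but structurally incompatible with the construction. You propose to ``factor $\mrm{Q}_A=\mbf{c}_A\cap\Delta_E$ compatibly with the tensor decomposition of $\widehat{E}_{S,\otimes}^\times$'' and to write $\mrm{Q}_A$ as a sum of $r$-fold tensor products of rank-one Stark--Heegner invariants. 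Corollary \ref{dimone} rules this out: the $\pi$-isotypic cohomology $\mrm{H}^{\bfcdot}(\Gamma_\Sigma,\bb{Q})_\pi$ vanishes for every proper subset $\Sigma\subsetneq S$, so there are no single-prime eigenclasses attached to $A$ at intermediate levels out of which such rank-one constituents could be built; the eigenform only materializes at full level $S$, and this is exactly why the class $\mbf{c}_A$ is forced to take values in the genuinely plectic space $\scr{M}_\plectic(\bb{P}^1(F_S),\bb{Z})$. Relatedly, the vanishing condition on cylindrical sets does not make the integral ``multiplicative across the $r$ factors''; its actual effect (Lemma \ref{trivial integrals}) is to annihilate any integrand that is constant in one variable, which is what makes the pairing with plectic zero-cycles well defined and what reflects the fact that $\mrm{Q}_A$ is an $r$-th derivative of a single $L$-function rather than a product of $r$ first derivatives. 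So a reduction to the $r=1$ case is not available even conjecturally, quite apart from the fact that the $r=1$ Stark--Heegner conjecture is itself open in the non-CM setting you would need.

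Two smaller points. First, your heuristic for $\pi_S$ is in the right spirit but the paper's motivation is different: the signs $(1\mp\sigma_\frak{p}^*)$ are dictated by the shape of the homology class $\Delta_E$ and by the split/non-split multiplicative reduction of $A$ at $\frak{p}$ (i.e.\ the eigenvalue of Frobenius acting through Tate's uniformization $E_\frak{p}^\times\to A(E_\frak{p})$), not by a root-number computation parallel to $\varepsilon(A/E)=(-1)^{r+\omega(\frak{n}^{\mbox{\tiny $-$}})+(t-n)}$; the latter only enters in choosing the quaternion algebra $B$. Second, your closing suggestions (a geometric incarnation via \cite{PlecticJacobians}, or an Euler-system bound) are reasonable directions, but both references you lean on are themselves conjectural frameworks, so they cannot currently discharge the algebraicity. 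The honest summary is: no proof exists, the paper supports the conjecture numerically, and the specific reduction you propose would produce $0$ rather than $\mrm{Q}_A$ if carried out literally.
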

\begin{remark}
	The homomorphism $\det_S$ may have a non-trivial kernel even when $r_\mrm{alg}(A/E)= r$.
\end{remark}

\noindent On its own, Conjecture \ref{algebraicity} does not reveal enough about the intimate relation between the plectic point $\phi_{\mbox{\tiny $\mrm{Tate}$}}(\mrm{Q}_{A})$ and the global arithmetic of the elliptic curve $A_{/E}$. Thus, we couple it with another conjecture of Kolyvagin-type described below.
 Let $A^+=A$, denote by $A^-$ the quadratic twist of $A_{/F}$ with respect to the quadratic extension $E/F$, and define
\[
\varrho_{A}(S):=\text{max}\big\{r_\mrm{alg}(A^\pm/F)+\lvert S^\pm\rvert\big\}.
\]
The anticyclotomic $p$-adic $L$-function $\scr{L}_S(A/E)$ attached to the triple $(A_{/F},E,S)$ in \cite{plecticHeegner} vanishes to order at least $r$ at the trivial character, and the value of its $r$-th derivative computes the plectic $p$-adic invariant $\mrm{Q}_A$. Moreover, when $r_\mrm{alg}(A/E)\ge r$, the theory of derived $p$-adic heights \cite{derivedheights} suggests that $\varrho_{A}(S)\ge r$ is a lower bound for the exact order of vanishing of the $p$-adic $L$-function. It is then natural to formulate the following conjecture.

\begin{conjecture}\label{Kolyvagin}
Suppose that $r_\mrm{alg}(A/E)\ge r$, then 
	\[
	\phi_{\mbox{\tiny $\mrm{Tate}$}}(\mrm{Q}_{A})\not=0\quad\implies\quad r_\mrm{alg}(A/E)=r\quad\&\quad \varrho_{A}(S)=r.
	\]	
	If the $L$-function $L(A/F,s)$ is primitive, then the converse implication also holds.
\end{conjecture}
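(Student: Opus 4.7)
The plan is to exploit the analytic input sketched immediately above Conjecture~\ref{Kolyvagin}: the identification of $\mrm{Q}_A$ with the $r$-th derivative of $\scr{L}_S(A/E)$ at the trivial character, together with the expected lower bound $\mrm{ord}_{\chi=\mbf{1}}\scr{L}_S(A/E)\ge \varrho_A(S)$ coming from derived $p$-adic heights. Both halves of the conjecture should then reduce to comparing this analytic order of vanishing with algebraic invariants, the key elementary inputs being $r_{\mrm{alg}}(A/E)=r_{\mrm{alg}}(A^+/F)+r_{\mrm{alg}}(A^-/F)$ (from the quadratic twist) and $|S^+|+|S^-|=r$.

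For the forward direction, assume $r_{\mrm{alg}}(A/E)\ge r$ and $\phi_{\mbox{\tiny $\mrm{Tate}$}}(\mrm{Q}_A)\neq 0$. First, by adding the two quantities defining $\varrho_A(S)$ and using the hypothesis $r_{\mrm{alg}}(A/E)\ge r$, one obtains $2\varrho_A(S)\ge r_{\mrm{alg}}(A/E)+r\ge 2r$, hence $\varrho_A(S)\ge r$. Next, non-vanishing of $\phi_{\mbox{\tiny $\mrm{Tate}$}}(\mrm{Q}_A)$ trivially forces $\mrm{Q}_A\neq 0$, so the $r$-th derivative of $\scr{L}_S(A/E)$ at $\mbf{1}$ is non-zero and consequently $\mrm{ord}_{\chi=\mbf{1}}\scr{L}_S(A/E)\le r$. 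Sandwiching with the derived-heights lower bound forces $\varrho_A(S)=r$. Unpacking that equality, both $r_{\mrm{alg}}(A^\pm/F)+|S^\pm|\le r$; summing yields $r_{\mrm{alg}}(A/E)\le r$, and combined with the hypothesis, equality $r_{\mrm{alg}}(A/E)=r$.

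The converse, conditional on primitivity of $L(A/F,s)$, is the genuinely deep Kolyvagin-type statement: from $r_{\mrm{alg}}(A/E)=r$ and $\varrho_A(S)=r$ one must extract $\mrm{Q}_A\neq 0$. The natural strategy is to construct a plectic Euler system by varying the homology class $\Delta_E$ over anticyclotomic ring-class towers, then perform a Kolyvagin-style descent in the spirit of Howard and Bertolini--Darmon; primitivity ensures that $\scr{L}_S(A/E)$ truly controls the arithmetic of $A/E$ rather than an imprimitive factor, so that analytic non-vanishing reflects the expected arithmetic non-triviality.

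The main obstacle is that neither a full derived $p$-adic height theory nor such a plectic Euler system is presently available at the required generality. The cited results on derived heights are developed in essentially rank-one situations, so even the lower bound used in the forward direction currently has the status of a working hypothesis, and no construction of a plectic Euler system interpolating the classes $\Delta_E$ is known. Making either piece unconditional appears to require the geometric framework of plectic Jacobians sketched in \cite{PlecticJacobians}, and we expect that framework to eventually supply both ingredients.
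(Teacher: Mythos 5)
The statement you are addressing is Conjecture~\ref{Kolyvagin}: the paper offers no proof of it, only the motivating discussion in Section~\ref{conjectures} (the identification of $\mrm{Q}_A$ with the $r$-th derivative of $\scr{L}_S(A/E)$ from \cite{plecticHeegner}, and the heuristic from derived $p$-adic heights \cite{derivedheights}) together with the numerical evidence of Section~\ref{Lastsection}. Your proposal is therefore not a proof but a conditional rationalization, and in fact it reproduces essentially the same motivation the authors themselves give. The concrete gap is that your forward direction hinges entirely on the inequality $\mrm{ord}_{\chi=\mbf{1}}\,\scr{L}_S(A/E)\ge \varrho_A(S)$, which is not a theorem: the paper only says the theory of derived heights \emph{suggests} it, and a derived-height theory producing such a bound for the anticyclotomic $p$-adic $L$-function attached to $(A_{/F},E,S)$ with $|S|=r\ge 2$ does not exist (the cited theory is developed in an essentially rank-one, cyclotomic setting, and even there the relevant bounds require non-degeneracy hypotheses on the height pairings that you do not address). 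Once that input is removed, the only unconditional content of your forward argument is the elementary bookkeeping $2\varrho_A(S)\ge r_{\mrm{alg}}(A/E)+r$ and the implication $\phi_{\mbox{\tiny $\mrm{Tate}$}}(\mrm{Q}_A)\neq 0\Rightarrow \mrm{Q}_A\neq 0\Rightarrow \mrm{ord}_{\chi=\mbf{1}}\,\scr{L}_S(A/E)=r$, which falls far short of the conclusion $r_{\mrm{alg}}(A/E)=r$ and $\varrho_A(S)=r$.

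For the converse you only sketch a strategy (a ``plectic Euler system'' obtained by varying $\Delta_E$ over ring-class towers plus a Kolyvagin-type descent), and you acknowledge that no such system is known; moreover it is not explained where the primitivity of $L(A/F,s)$ would actually enter such a descent, beyond the vague assertion that it rules out imprimitive factors. Since both halves of your argument rest on ingredients at least as deep as the conjecture itself, the proposal should be presented as a heuristic consistent with the paper's own discussion (which it is, and correctly so for the rank additivity $r_{\mrm{alg}}(A/E)=r_{\mrm{alg}}(A^+/F)+r_{\mrm{alg}}(A^-/F)$ and the partition $|S^+|+|S^-|=r$), not as a proof; as a proof attempt it has an unfillable gap at the derived-heights lower bound and at the construction of the Euler system.
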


\noindent 	When the algebraic rank $r_\mrm{alg}(A/E)$ is strictly smaller than $r$ we cannot yet guess what arithmetic information is contained in the plectic $p$-adic invariant. Nevertheless,  we expect that -- assuming $r_\mrm{alg}(A/E)<r$ -- the plectic point should be non-zero whenever the parity of $r=\lvert S\rvert$ matches  the  parity of the order of vanishing of $\scr{L}_S(A/E)$ (see \cite{AnticyclotomicBG}, Corollary 5.7).
	
\begin{conjecture}\label{lowerrank}
Suppose $r_\mrm{alg}(A/E)<r$, then
		\[
		\phi_{\mbox{\tiny $\mrm{Tate}$}}(\mrm{Q}_{A})\not=0\quad\iff\quad (-1)^r=\varepsilon(A/F)\cdot \varepsilon_{S},
		\] 
		where $\varepsilon_S$ is the product of local root numbers of $A_{/F}$ at the primes in $S$. 
\end{conjecture}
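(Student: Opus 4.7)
The plan is to convert the non-vanishing of $\phi_{\mbox{\tiny $\mrm{Tate}$}}(\mrm{Q}_{A})$ into an order-of-vanishing statement for the anticyclotomic $p$-adic $L$-function $\scr{L}_S(A/E)$. Since $\mrm{Q}_{A}$ realizes the $r$-th derivative $\scr{L}_S(A/E)^{(r)}(\mbf{1})$ up to an explicit non-zero factor (as stated before Conjecture~\ref{Kolyvagin}, and proved in \cite{plecticHeegner}, Theorem~A), $\phi_{\mbox{\tiny $\mrm{Tate}$}}(\mrm{Q}_{A})\ne 0$ is equivalent to $\scr{L}_S(A/E)$ vanishing to order exactly $r$ at $\mbf{1}$. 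Under the hypothesis $r_\mrm{alg}(A/E)<r$, the target statement thus reduces to: the order of vanishing has the same parity as $r$ iff $(-1)^r=\varepsilon(A/F)\cdot\varepsilon_S$.

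\textbf{Functional equation and forward implication.} First, I would establish a functional equation of the shape
\[
\scr{L}_S(A/E)(\chi) = \varepsilon(A/F)\cdot\varepsilon_S\cdot \scr{L}_S(A/E)(\chi^{-1})
\]
for anticyclotomic $p$-adic characters $\chi$. The local contribution $\varepsilon_S=\prod_{\frak{p}\in S}a_\frak{p}(A)$ should emerge from the action of the Atkin--Lehner involutions on the measure-valued class $\mbf{c}_A$: because each $\frak{p}\in S$ exactly divides $\frak{f}_A$, we have $a_\frak{p}(A)=\pm1$ determined by whether $A_{/F_\frak{p}}$ has split or non-split multiplicative reduction, which is exactly the partition $S=S^+\cup S^-$ used to define $\pi_S$. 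The global factor $\varepsilon(A/F)$ should enter through the action of the non-trivial element of $\mrm{Gal}(E/F)$ on the homology class $\Delta_E$, together with the complex functional equation of $L(A/F,s)$. Plugging this symmetry into the Taylor expansion at $\chi=\mbf{1}$ forces all non-zero coefficients to sit in the parity prescribed by $\varepsilon(A/F)\cdot\varepsilon_S$, giving the forward implication immediately.

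\textbf{Backward implication.} When the parity matches, one must exhibit genuine non-vanishing of $\scr{L}_S^{(r)}(\mbf{1})$. The assumption $r_\mrm{alg}(A/E)<r$ rules out the arithmetic enhancements of the order of vanishing predicted by derived $p$-adic heights in Conjecture~\ref{Kolyvagin}, so we expect the order of vanishing to be the minimum imposed by parity. I would attempt this in two complementary ways: (a) via a Waldspurger/Ichino-type identity expressing $\scr{L}_S^{(r)}(\mbf{1})$ as a product of local factors times an automorphic period that can be shown to be generically non-zero on the given cohomological eigenspace; and (b) by bootstrapping from the base case $r_\mrm{alg}(A/E)=0$ and $|S|=r$, which should be available from \cite{AnticyclotomicBG}, Corollary~5.7, and then descending to $0<r_\mrm{alg}(A/E)<r$ via a specialization / congruence argument within the plectic Hida family.

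\textbf{Main obstacle.} The genuinely hard step is (a)/(b). The quaternion algebra $B/F$ is not totally definite, so $\scr{L}_S(A/E)$ does not arise from an atomic measure but from a cap product of $\mrm{H}^{n+s}$ with $\mrm{H}_{n+s}$, and the toric period at hand is a higher $p$-adic regulator rather than a bare integral. Classical non-vanishing arguments for central $L$-values on definite quaternion algebras therefore do not transfer verbatim; one must control the $n$ archimedean split places and $s$ complex places simultaneously through the $\plectic$-decomposition. Extending \cite{AnticyclotomicBG} to this higher-dimensional plectic setting, and in particular proving that the resulting period is automatically non-zero once the parity is correct, is the essential new ingredient that the present paper leaves as a conjecture.
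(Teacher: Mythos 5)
The statement you are trying to prove is Conjecture~\ref{lowerrank}: the paper does not prove it and offers no argument beyond a heuristic (the expectation, recorded just before the conjecture, that the plectic point is non-zero exactly when the parity of $r=\lvert S\rvert$ matches the parity of the order of vanishing of $\scr{L}_S(A/E)$, citing \cite{AnticyclotomicBG}, Corollary 5.7) and the numerical evidence of Table~\ref{table1}. Your proposal essentially reconstructs that same heuristic, so as motivation it is on target, but it is not a proof, and you concede as much: the backward implication --- genuine non-vanishing of $\scr{L}_S^{(r)}(\mathbf{1})$ when the sign condition holds --- is exactly the open problem, and neither route (a) nor (b) that you sketch is available. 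Waldspurger/Ichino-type non-vanishing and the results of \cite{AnticyclotomicBG} live on definite (or at least lower-dimensional) quaternionic settings, while here $B$ is indefinite, the class $\mbf{c}_A$ sits in $\mrm{H}^{n+s}$, and the relevant quantity is a higher $p$-adic regulator; no specialization or congruence argument in a ``plectic Hida family'' is known that would transfer non-vanishing to this situation. Writing ``I would establish'' and ``should be available'' at these points is where the proof stops being a proof.

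Two further gaps even in the forward direction. First, the $p$-adic functional equation $\scr{L}_S(A/E)(\chi)=\varepsilon(A/F)\cdot\varepsilon_S\cdot\scr{L}_S(A/E)(\chi^{-1})$ is asserted, not derived: in the plectic setting the interaction of the Atkin--Lehner involutions at the primes of $S$ with the measure-valued class $\mbf{c}_A$, and of $\mrm{Gal}(E/F)$ with $\Delta_E$, has to be worked out (this is closely tied to the definition of the projector $\pi_S$ via the partition $S=S^+\cup S^-$), and the paper does not supply it. Second, you silently replace $\phi_{\mbox{\tiny $\mrm{Tate}$}}(\mrm{Q}_A)\neq 0$ by $\mrm{Q}_A\neq 0$ (equivalently, by $\scr{L}_S^{(r)}(\mathbf{1})\neq 0$ via \cite{plecticHeegner}, Theorem A); but $\phi_{\mbox{\tiny $\mrm{Tate}$}}$ has a non-trivial kernel generated by the Tate periods $q_\frak{p}$, so the equivalence between the two non-vanishing statements is itself an assumption that would need justification. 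In short: your strategy matches the paper's motivation for stating the conjecture, but the key analytic input does not exist yet, which is precisely why the statement is a conjecture rather than a theorem.
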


\subsection{Numerical evidence}\label{Numerical evidence}
We are most excited in supporting our conjectures with computational experiments. We choose to perform our calculation in the setting of $F$ a real quadratic number field and $E/F$ an almost totally real (ATR) extension in order to work in cohomological degree one, and to have a supply of elliptic curves of positive rank and relatively small conductor. We consider the two smallest real quadratic fields of narrow class number one, where the prime $3$ splits:
\[
\bb{Q}(\sqrt{13}),\  \bb{Q}(\sqrt{37}).
\] 
For each base field we then test our conjectures on a few isogeny classes of elliptic curves, whose conductor satisfies the requirement for the definition of plectic $3$-adic invariants over ATR extensions.
The code is implemented in Sage \cite{sagemath} (using some Magma  functions \cite{magma} at certain steps), and is available at \href{https://github.com/mmasdeu/darmonpoints/blob/master/darmonpoints/plectic.py}{this link}.\footnote{
\url{https://github.com/mmasdeu/darmonpoints/blob/master/darmonpoints/plectic.py}}
Moreover, it makes use of some functionalities of the Darmon Points package, previously developed by Masdeu and available at the same Github repository.
	  The main difficulty in computing plectic invariants consists in explicitly describing the class 
	  \[
	  \mbf{c}_A\in \mrm{H}^1(\Gamma,\scr{M}_\plectic(\bb{P}^1(F_S),\bb{Z}))_\pi.
	  \]
	In Section \ref{algorithm} we describe the algorithm that computes approximations $\mbf{c}^m_A$ of $\mbf{c}_A$ for integers $m\ge0$. Note that we do not compute directly the plectic invariant, but rather its image under a logarithm map:  let $q_{\frak{p}}\in F_{\frak{p}}^\times$ denote the period of the Tate curve $A_{/F_{\frak{p}}}$ for $\frak{p}\in S$, and let $E_{S,\otimes}$ denote the tensor product $\otimes_{\frak{p}\in S}E_{\frak{p}}$ of $\bb{Q}_p$-vector spaces; then there is a homomorphism $ \log_{S}\colon\widehat{E}^\times_{S,\otimes}\rightarrow E_{S,\otimes}$ -- factoring through Tate's uniformization $\log_{S}=\log_A\circ\ \phi_{\mbox{\tiny $\mrm{Tate}$}}$ -- such that 
	 \[	  \log_{S}(\mrm{Q}_{A})=\int_{\bb{P}^1(F_S)}\bigotimes_{\frak{p}\in S} \log_{q_{\frak{p}}} \left(\frac{t_{\frak{p}}-\tau_{\frak{p}}}{t_{\frak{p}}-\bar\tau_{\frak{p}}} \right)\ \mrm{d}\mu_A(t).
	 \]
	 As the induced morphism $\log_A:\widehat{A}(E_{S})\to E_{S,\otimes}$ is injective, we do not lose information about $\phi_{\mbox{\tiny $\mrm{Tate}$}}(\mrm{Q}_{A})$ in computing the additive $p$-adic integral for $\log_{S}(\mrm{Q}_{A})$. 
Concretely, we numerically calculate Riemann sums of the form
      \[
        \sum_{U_e\in \scr{U}_m} \bigotimes_{\frak{p}\in S} \log_{q_{\frak{p}}} \left(\frac{t_{e,\frak{p}}-\tau_{\frak{p}}}{t_{e,\frak{p}}-\bar\tau_{\frak{p}}} \right)\cdot \mbf{c}_A^m(e),
      \]
      where $\scr{U}_m$ is certain covering of $\mathbb{P}^1(F_S)$ and $t_e\in U_e$.  We conclude the introduction with an example of our results, and refer to Section \ref{Lastsection} for the rest of our data. 
	  
\begin{remark}
	The algorithm based on Riemann sums is exponential in time; this explains the limited number of digits we can compute. In future work, we plan to obtain a polynomial time algorithm by adapting the methods of \cite{GM-Oc} to the setting of this paper.
\end{remark}	  
	  
\noindent Consider the elliptic curve with LMFDB \cite{lmfdb} label \lmfdb{37}{63} 2d1 defined over $F=\mathbb{Q}(\sqrt{37})$ whose conductor $\frak{f}_A=\frak{p}_1\cdot \frak{p}_2\cdot \frak{q}$ has norm $63=3\cdot 3\cdot7$. It has a Weierstrass model
  \[
    A_{/F} \colon\ y^2 + x y + y = x^{3} + w x^{2} + \left(w + 1\right) x + 2, \qquad w = \frac{1+\sqrt{37}}2.
  \]
The elliptic curve has rank $2$ over the ATR extension $E=F\big(\sqrt{\beta}\big)$, for $\beta=62-21w$, where all prime divisors of $\frak{f}_A$ are inert. Moreover, by setting $S=\{\frak{p}_1,\frak{p}_2\}$ we find that $\varrho_A(S)=2$ because $A_{/F}$ has rank $1$ over $F$, and it has both a $3$-adic prime of split and non-split multiplicative reduction. Therefore, Conjectures \ref{algebraicity} and \ref{Kolyvagin} imply that the plectic point is non-zero and explicitly related to a generator of $\wedge^2A(E)$. We use \texttt{Magma} to compute the generators for $A(E)$
  \[
  P_1=\big(3-w,\ 4-w\big),\qquad  P_2=\left(8-\frac{25}{9} w,\ \left(-\frac{23}{27}w + \frac{17}6\right)\sqrt{\beta} + \frac{25}{18}w - \frac 92\right).
  \]
 Let $\bb{Q}_9=\bb{Q}_3(\sqrt{-1})$ denote the unramified quadratic extension of $\bb{Q}_3$ and set $P_S=\det_S(P_1\wedge P_2)$. Using \texttt{Sage}, we compute the following approximation
  \[
   \log_A(P_S) =
    \big(2 \cdot 3^{2} + 3^{6} + 2 \cdot 3^{7} + 3^{9} + O(3^{10})\big)\cdot \big(\sqrt{-1}\otimes\sqrt{-1}\big)
  \]
as an element of $\mathbb{Q}_{9}\otimes_{\bb{Q}_3} \mathbb{Q}_{9}$. For the computation of the plectic invariant we used a 72-CPU cluster with $500\text{GB}$ RAM at the University of Warwick. To approximate the cohomology class $\mbf{c}_A$ modulo $3^7$ we solve a linear system of $12,740,008$ equations in $19,114,384$ unknowns. It takes $\sim\hspace{-0.5mm}60$ hours, using 16 CPUs, to build the system, and $\sim \hspace{-1mm}2$ hours (non-parallel) to solve it. Finally, the integration step takes $\sim\hspace{-1mm} 10$ hours with 64 CPUs. In total, the computation used $\sim\hspace{-0.8mm} 300\text{GB}$ of RAM memory. Our algorithms compute the quantity
\[
\log_S(\mrm{Q}_A) = \big(2 \cdot 3^{2} + 3^{6} + O(3^7)\big)\cdot \big(\sqrt{-1}\otimes\sqrt{-1}\big),
\] 
which matches $ \log_A(P_S)$ modulo $3^7$.

\begin{acknowledgements}
We would like to express our deep gratitute to Lennart Gehrmann for sharing the exploration of the plectic world, to Matteo Tamiozzo for explaining Nekov\'a$\check{\text{r}}$ and Scholl's conjectures to us, and to the Mathematics Department at the University of Warwick whose computer facilities carried out our more demanding computations. Moreover, we thank Dante Bonolis, Henri Darmon, David Lilienfeldt, Jan Nekov\'a$\check{\text{r}}$, Kartik Prasanna, Tony Scholl, Nicolas Simard, Ari Shnidman, Christopher Skinner and Jan Vonk for their support and for many enriching conversations.  
 
 \noindent The article was completed while Fornea was a Simons Junior Fellow and Guitart was partially funded
by project PID2019-107297GB-I00. Moreover, this work has received funding from the European Research Council (ERC) under the European
Union’s Horizon 2020 research and innovation programme (grant agreement No 682152).
\end{acknowledgements}

\section{ Review of group cohomology}\label{sec: review of group cohomology}
We keep the hypotheses of the introduction: $F$ is a number field with $t$ real places, $s$ complex places, and narrow class number one. We let $n\le t$ be a non-negative integer and consider a square-free $\cal{O}_F$-ideal $\frak{n}^{\mbox{\tiny $-$}}$ whose number of prime factors $\omega(\frak{n}^{\mbox{\tiny $-$}})$ satisfies $\omega(\frak{n}^{\mbox{\tiny $-$}})\equiv_2(t-n)$. 

\noindent Let $B/F$ be a quaternion algebra of discriminant  $\frak{n}^{\mbox{\tiny $-$}}$ and whose ramified archimedean places are  the real places $\infty_{n+1},\dots,\infty_t$ of $F$. 
For each $\cal{O}_F$-ideal $\frak{c}$ prime to $\frak{n}^{\mbox{\tiny $-$}}$ we choose an Eichler $\cal{O}_F$-order $R_{\frak{c}}$ in $B$ of level $\frak{c}$ such that $\frak{c}_1\mid\frak{c}_2$ implies $R_{\frak{c}_2}\subseteq R_{\frak{c}_1}$. For a subgroup $G$ of $B^\times$ we set 
\[\begin{split}
G_1&:=\{\alpha\in G:\ \mrm{nrd}(\alpha)=1\},\\
G_+&:=\{\alpha\in G:\ \infty_i(\mrm{nrd}(\alpha))>0\ \ \forall\ i=1,\dots,n\}
\end{split}\]
where $\mrm{nrd}:B^\times\to F^\times$ is the reduced norm, and define the arithmetic subgroups of $B^\times/F^\times$  by
\[
\Gamma_{\frak{c}}:=(R_{\frak{c}}^\times)_1/\{\pm1\}.
\]
As we will describe in the next section, the cohomology of these arithmetic subgroups is endowed with an action of certain global units. A first indication of this fact is that the quotient of
\[
U_B=\{u\in\cal{O}^\times_F:\ \infty_i(u)>0\ \ \forall\ i=n+1,\dots, t\},
\]
 by the group $U_+$  of totally positive units of $F$, fits in a short exact sequence (\cite{Greenberg}, Section 2)
\begin{equation}\label{ses1}
	\xymatrix{
1\ar[r]& \Gamma_{\frak{c}}\ar[r]& R_{\frak{c}}^\times/\cal{O}_F^\times\ar[r]^-{\mrm{nrd}}&U_B/U_+\ar[r]&1.
}\end{equation}
Analogous statements hold for the corresponding $S$-arithmetic sugroups: we fix a rational prime $p$, a set $S=\{\frak{p}_1,\dots,\frak{p}_r\}$ of $r$ distinct $p$-adic $\cal{O}_F$-prime ideals, and an ideal $\frak{n}^{\mbox{\tiny $+$}}$ prime to $\frak{n}^{\mbox{\tiny $-$}}$. If we denote by $\cal{O}_F[\mbox{\small $S^{-1}$}]$ the ring of $S$-integers, then we can consider the $\cal{O}_F[\mbox{\small $S^{-1}$}]$-Eichler order of $B$
\[
R:=R_{\frak{n}^{\mbox{\tiny $+$}}}\otimes_{\cal{O}_F}\cal{O}_F[\mbox{\small $S^{-1}$}]
\]
 of level $\frak{n}^{\mbox{\tiny $+$}}$ whose associated $S$-arithmetic group is $\Gamma=R_1^\times/\{\pm1\}$. Moreover, the reduced norm induces a short exact sequence
 \begin{equation}\label{ses2}
 \xymatrix{
 1\ar[r]& \Gamma\ar[r]& R_\mrm{ev}^\times/\cal{O}_F[\mbox{\small $S^{-1}$}]^\times\ar[r]^-{\mrm{nrd}}&U_B/U_+\ar[r]&1
 }\end{equation}
where $R^\times_\mrm{ev}=\{\alpha\in R^\times:\ \mrm{ord}_\frak{p}\circ\mrm{nrd}(\alpha)\equiv_20\quad \forall\ \frak{p}\in S\}$ is the group of invertible elements of $R$ whose reduced norm has even valuation at all the primes in $S$.

\subsection{Hecke operators}
In this subsection we let $G$ be either $\Gamma$ or $\Gamma_{\frak{c}}$ for some $\cal{O}_F$-ideal $\frak{c}$ prime to $\frak{n}^{\mbox{\tiny $-$}}$.
For any $\cal{O}_F$-prime ideal $\frak{q}\nmid p_S\cdot\frak{n}^{\mbox{\tiny $-$}}$ we choose a totally positive generator $\varpi_\frak{q}$, then there is an element $\gamma_\frak{q}\in B^\times$ of reduced norm $\varpi_\frak{q}$ such that the intersection of $G$ with $G^{\gamma_\frak{q}}=\gamma_\frak{q} G\gamma_\frak{q}^{-1}$ has finite index in both groups. For any $G$-module $M$, the Hecke operator $T_\frak{q}$ is the linear endomorphism on cohomology defined by the composition
\[\xymatrix{
\mrm{H}^i(G,M)\ar@{.>}[rr]^-{T_\frak{q}}\ar[d]_-{\mrm{res}}&& \mrm{H}^i(G,M)\\
\mrm{H}^i\big(G\cap G^{\gamma_\frak{q}},M\big)\ar[rr]^-{\mrm{conj}}&& \mrm{H}^i\big(G^{\gamma_\frak{q}^{-1}}\cap G,M\big).\ar[u]^-{\mrm{cores}}&
}\]
Furthermore, when $M$ is either an $(R_{\frak{c}}^\times/\cal{O}_F^\times)$-module or an $(R_\mrm{ev}^\times/\cal{O}_{F}[\mbox{\small $S^{-1}$}]^\times)$-module the short exact sequences ($\ref{ses1}$), ($\ref{ses2}$) endow the cohomology
  $\mrm{H}^{\bfcdot}(\Gamma, M)$ and $\mrm{H}^{\bfcdot}(\Gamma_{\frak{c}}, M)$ with an action of $U_B/U_+$. 
 \begin{definition}
Let $\delta_{i,j}$ be Kronecker's delta function.   For every index $i=1,\dots, n$, we fix a global unit $\varepsilon_i\in U_B$ satisfying
\[
(-1)^{\delta_{i,j}}\cdot\infty_j(\varepsilon_i)>0\quad \forall\ j=1,\dots, n.
\]
 We denote by $T_{i}$ the involution  determined by the coset $\varepsilon_iU_+$ acting via ($\ref{ses1}$), ($\ref{ses2}$) on cohomology.
\end{definition}

\subsection{Cohomology of arithmetic subgroups}
We denote by $\cal{H}$ the Poincar\'e upper half-plane endowed with the action of $\mrm{PSL}_2(\bb{R})$ by Moebius transformation, and we consider the half-space model of the real hyperbolic $3$-space
\[
\bb{H}=\{(x,y)\in\bb{C}\times\bb{R}\lvert\ y>0\}
\]
 endowed with the $\mrm{PSL}_2(\bb{C})$-action explicitly given by
 \[
\begin{pmatrix}
	 a&b\\ c&d
	 \end{pmatrix}\cdot(x,y)=\left(\frac{(ax+b)\overline{(cx+d)}+a\bar{c}y^2}{\lvert cx+d\rvert^2+\lvert cy\rvert^2}, \frac{\lvert ad-bc\rvert y}{\lvert cx+d\rvert^2+\lvert cy\rvert^2}\right).
 \]
By fixing isomorphisms for the archimedean completions
\[\begin{split}
&\iota_{i}:B\otimes_{F,\infty_i}\bb{R}\overset{\sim}{\longrightarrow}\mrm{M}_2(\bb{R})\quad\ \ \ \ \text{for}\  i=1,\dots,n,\\
 &\iota_{j}:B\otimes_{F,\infty_j}\bb{C}\overset{\sim}{\longrightarrow}\mrm{M}_2(\bb{C})\qquad \text{for}\ j=t+1,\dots,t+s,
\end{split}\]
the arithmetic subgroup $\Gamma_\frak{c}$ act on the manifold $\cal{H}^n\times\bb{H}^s$. The quotient is a Riemannian orbifold
\[
X_\frak{c}:=\Gamma_\frak{c}\backslash\big(\cal{H}^n\times\bb{H}^s\big)
\]
 whose rational cohomology computes group cohomology $\mrm{H}^{\bfcdot}(X_\frak{c},\bb{Q})\cong\mrm{H}^{\bfcdot}(\Gamma_\frak{c},\bb{Q})$.
We recall some useful facts about the $\bb{C}$-valued cohomology of $X_\frak{c}$ taken from (\cite{ArbitraryDarmon}, Section 2). The cohomology $\mrm{H}^{\bfcdot}(X_\frak{c},\bb{C})$ contains three subspaces that are preserved by the action of Hecke operators:
\begin{itemize}
	\item [$\bfcdot$] the \emph{universal} subspace $\mrm{H}^{\bfcdot}_\mrm{uni}(X_\frak{c},\bb{C})$ is the image of $\Gamma_\frak{c}$-invariant differential forms on $\cal{H}^n\times\bb{H}^s$;
	\item [$\bfcdot$] the \emph{infinity} subspace $\mrm{H}^{\bfcdot}_\mrm{inf}(X_\frak{c},\bb{C})$ is the image of the cohomology of the boundary of the Borel-Serre compactification of $X_\frak{c}$;
	\item [$\bfcdot$] the \emph{cuspidal} subspace $\mrm{H}^{\bfcdot}_\mrm{cusp}(X_\frak{c},\bb{C})$ is generated  by cohomological cuspforms of weight $2$ and their translates by the archimedean involutions.	
\end{itemize}
The Hecke operators act on the non-cuspidal parts $\mrm{H}^{\bfcdot}_\mrm{uni}(X_\frak{c},\bb{C})$, $\mrm{H}^{\bfcdot}_\mrm{inf}(X_\frak{c},\bb{C})$ through multiplication by their degree, i.e.
\[
 \mrm{deg}(T_{i})=+1,\quad\mrm{deg}(T_\frak{q})=\begin{cases}
	\mrm{N}_{F/\bb{Q}}(\frak{q})+1& \text{if}\ \ \frak{q}\nmid\frak{c}\\
	\mrm{N}_{F/\bb{Q}}(\frak{q})& \text{if}\ \ \frak{q}\mid\frak{c}
\end{cases}.
\]
Furthermore, the cohomology of $X_\frak{c}$ admits a Hecke equivariant direct sum decomposition
	\begin{equation}\label{decomposition}
	\mrm{H}^i(X_\frak{c},\bb{C})\cong 
		 \mrm{H}^i_\mrm{uni}(X_\frak{c},\bb{C})\oplus \mrm{H}^i_\mrm{inf}(X_\frak{c},\bb{C})\oplus \mrm{H}^i_\mrm{cusp}(X_\frak{c},\bb{C}) \qquad \forall\ i.
	\end{equation}
Consider $A_{/F}$ a modular elliptic curve of conductor $\frak{f}_A=p_S\cdot\frak{n}^{\mbox{\tiny $+$}}\cdot\frak{n}^{\mbox{\tiny $-$}}$ with associated automorphic representation $\pi$ of $B^\times$. For any subset $\Sigma\subseteq S$ we write $p_\Sigma=\prod_{\frak{p}\in\Sigma}\frak{p}$ and, for notational convenience, we denote by $\Gamma_\Sigma$ the arithmetic subgroup of $B^\times$ of level $p_\Sigma\cdot\frak{n}^{\mbox{\tiny $+$}}$.

\begin{corollary}\label{dimone}
	The $\pi$-isotypic component of the group cohomology of $\Gamma_\Sigma$ satisfies
	\[
	\mrm{dim}_\bb{Q}\ \mrm{H}^{\bfcdot}(\Gamma_\Sigma,\bb{Q})_{\pi}=0\quad \text{if}\ \ \Sigma\not=S,\qquad \mrm{dim}_\bb{Q}\ \mrm{H}^{n+s}(\Gamma_S,\bb{Q})_{\pi}=1\quad \text{if} \ \ \Sigma=S.
	\]
\end{corollary}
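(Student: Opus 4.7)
The plan is to combine the Hecke-equivariant decomposition \eqref{decomposition} with the Jacquet--Langlands correspondence and a local multiplicity-one analysis at the primes in $S$. Since the Hecke eigenvalues $\{a_\frak{q}(A)\}$ are rational I would work over $\bb{C}$ throughout and descend at the end.

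First I would eliminate the two non-cuspidal summands $\mrm{H}^{\bfcdot}_{\mrm{uni}}$ and $\mrm{H}^{\bfcdot}_{\mrm{inf}}$ from the $\pi$-isotypic part. On each of them the Hecke operator $T_\frak{q}$ acts as multiplication by its degree $\mrm{N}_{F/\bb{Q}}(\frak{q})+1$ (or $\mrm{N}_{F/\bb{Q}}(\frak{q})$), whereas a $\pi$-eigenclass would satisfy $T_\frak{q}\cdot v=a_\frak{q}(A)\cdot v$ with $|a_\frak{q}(A)|\le 2\sqrt{\mrm{N}_{F/\bb{Q}}(\frak{q})}$ by the Hasse bound. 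Choosing any prime $\frak{q}$ of sufficiently large norm forces these eigenvalues to disagree, so the $\pi$-isotypic part of both summands vanishes identically.

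It remains to analyse the cuspidal contribution via Matsushima's formula: $\mrm{H}^{\bfcdot}_{\mrm{cusp}}(\Gamma_\Sigma,\bb{C})$ decomposes as a sum over cuspidal automorphic representations $\sigma$ of $B^\times(\bb{A}_F)$, where the $\sigma$-component equals $\mrm{H}^{\bfcdot}(\frak{g},K_\infty;\sigma_\infty)\otimes\sigma_f^{K_\Sigma}$ with $K_\Sigma$ the compact-open subgroup coming from the Eichler order of level $p_\Sigma\cdot\frak{n}^{\mbox{\tiny $+$}}$. The $\pi$-isotypic part picks out the Jacquet--Langlands transfer of the representation attached to $A$; its archimedean component is the parallel weight-two discrete series at each of the $n$ split real places and the appropriate principal series at each of the $s$ complex places, with one-dimensional relative Lie algebra cohomology concentrated in degree $n+s$. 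The global multiplicity therefore reduces to $\prod_v\dim\pi_v^{K_v}$. At unramified places and at primes dividing $\frak{n}^{\mbox{\tiny $+$}}$ the local level of $\Gamma_\Sigma$ matches the conductor of $\pi_\frak{q}$, so each contributes a line by the theory of new vectors; at $\frak{q}\mid\frak{n}^{\mbox{\tiny $-$}}$ the local $B_\frak{q}^\times$-representation is a character of the compact group and contributes a line. The decisive factors are at $\frak{p}\in S$: since $A$ has multiplicative reduction, $\pi_\frak{p}$ is an unramified quadratic twist of Steinberg, so $\dim\pi_\frak{p}^{\mrm{Iw}}=1$ while $\dim\pi_\frak{p}^{\mrm{GL}_2(\cal{O}_{F_\frak{p}})}=0$. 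If $\Sigma\ne S$, any $\frak{p}\in S\setminus\Sigma$ carries the maximal local level, forcing the product to vanish; if $\Sigma=S$, every local factor is a line, so $\dim_\bb{Q}\mrm{H}^{n+s}(\Gamma_S,\bb{Q})_\pi=1$ and all other degrees vanish.

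The main obstacle should be the careful bookkeeping through Jacquet--Langlands---matching the Eichler level at places where $B$ splits with the conductor of the transferred local representation, and verifying the one-dimensionality and concentration in degree $n+s$ of the archimedean $(\frak{g},K_\infty)$-cohomology at the complex places. Once those identifications are in hand, the global statement reduces to the standard local multiplicity-one fact for Steinberg representations.
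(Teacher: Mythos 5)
Your argument is correct and follows essentially the same route as the paper: the non-cuspidal summands of the decomposition \eqref{decomposition} are removed via the degree action of the Hecke operators against the Weil/Hasse bounds, and the cuspidal part is then computed by Jacquet--Langlands together with the generalized Eichler--Shimura (Matsushima) decomposition and multiplicity one, your contribution being merely to spell out the local newvector facts (Iwahori-fixed line for the twisted Steinberg at $\frak{p}\in S$, no hyperspecial-fixed vector) that the paper leaves implicit in its citations. The only slip is the unneeded aside that for $\Sigma=S$ ``all other degrees vanish'': when $s>0$ the $\pi$-part of cuspidal cohomology also occurs in degrees up to $n+2s$, but the corollary makes no claim about those degrees, so this does not affect the proof.
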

\begin{proof}
For any $\cal{O}_F$-ideal $\frak{c}$, the Hecke equivariant decomposition $\eqref{decomposition}$, and the Weil bounds for the Hecke eigenvalues of $A_{/F}$ imply that that the $\pi$-isotypic component $\mrm{H}^{\bfcdot}(X_\frak{c},\bb{C})_{\pi}$ is all contained in the cuspidal subspace. Then, the claims follow from a combination of the Jacquet--Langlands correspondence, the generalized Eichler--Shimura isomorphism (\cite{ArbitraryDarmon}, Section 2.5) and multiplicity-one for $\mrm{GL}_{2,F}$.
\end{proof}

\section{Bruhat-Tits buildings and harmonic cochains}
For every prime $\frak{p}\in S$ we consider the Bruhat-Tits tree $\scr{T}_\frak{p}$ of $\mrm{PGL}_2(F_\frak{p})$. It is a homogeneous tree whose set of vertices, $\scr{V}_\frak{p}$, consists of homothety classes of $\cal{O}_{F,\frak{p}}$-lattices of $F_\frak{p}\oplus F_\frak{p}$. Two vertices $v_1, v_2\in \scr{V}_\frak{p}$ are connected by an oriented edge $e\in\scr{E}_\frak{p}$ with source $s(e)=v_1$ and target $t(e)=v_2$ if there are representatives $\Lambda_1$, $\Lambda_2$ such that 
\[
\frak{p}\Lambda_2\subsetneq\Lambda_1\subsetneq\Lambda_2.
\]
If $e\in\scr{E}_\frak{p}$, then we denote by $\overline{e}$ the opposite edge, i.e., $s(\overline{e})=t(e)$ and $t(\overline{e})=s(e)$.
The tree $\scr{T}_\frak{p}$ is endowed with a natural left action of $\mrm{PGL}_2(F_\frak{p})$, therefore if we denote by $\mbf{v}_{\frak{p}}$ the vertex associated to the lattice $\cal{O}_{F,\frak{p}}\oplus\cal{O}_{F,\frak{p}}$ and by $\mbf{e}_{\frak{p}}$ the edge going from $\mbf{v}_{\frak{p}}$ to the vertex $\widehat{\mbf{v}}_{\frak{p}}$ associated with $\cal{O}_{F,\frak{p}}\oplus\frak{p}\cal{O}_{F,\frak{p}}$ we can make the identifications 
\[
\scr{V}_\frak{p}=\mrm{PGL}_2(F_\frak{p})/\mrm{PGL}_2(\cal{O}_{F,\frak{p}}),\qquad \scr{E}_\frak{p}=\mrm{PGL}_2(F_\frak{p})/\mrm{Iw}(\frak{p}).
\]
We let $\mrm{PGL}_2(F_\frak{p})^0$ be the group defined by the short exact sequence
\[\xymatrix{
1\ar[r]& \mrm{PGL}_2(F_\frak{p})^0\ar[r]&\mrm{PGL}_2(F_\frak{p})\ar[rr]^-{\mrm{ord}_\frak{p}\circ\det}&&\bb{Z}/2\bb{Z}\ar[r]&1,
}\]
and define the sets of even vertices and even edges by
\[
\scr{V}_\frak{p}^0:=\mrm{PGL}_2(F_\frak{p})^0/\mrm{GL}_2(\cal{O}_{F,\frak{p}}),\qquad\scr{E}_\frak{p}^0:=\mrm{PGL}_2(F_\frak{p})^0/\mrm{Iw}(\frak{p}).
\]

\begin{definition}
The Bruhat-Tits building for $\mrm{PGL}_2(F_S)$ is the product of trees $\scr{T}_S:=\prod_{\frak{p}\in S}\scr{T}_\frak{p}$. For any subset $\Sigma\subseteq S$ we define the set of (even) multivertices, (even) oriented multiedges by 
\[
\scr{V}_\Sigma^\star:=\prod_{\frak{p}\in \Sigma}\scr{V}_\frak{p}^\star,\qquad  \scr{E}^\star_{ \Sigma}:=\prod_{\frak{p}\in \Sigma}\scr{E}^\star_\frak{p}\qquad\text{for}\quad \star\in\{\emptyset,0\}.
\]
The inversion $\overline{\ }^\frak{p}:\scr{E}_\Sigma\rightarrow \scr{E}_\Sigma$ at $\frak{p}\in \Sigma$ is described by $(\overline{e}^\frak{p})_\frak{q}=e_\frak{q}$ if $\frak{q}\not=\frak{p}$ and $(\overline{e}^\frak{p})_\frak{q}=\overline{e_\frak{p}}$ if $\frak{q}=\frak{p}$.
\end{definition}

\noindent We fix once and for all an isomorphism $\iota\colon B_S\overset{\sim}{\rightarrow} \mrm{M}_2(F_S)$ such that
$\iota\colon R_{\cal{O}_F}\otimes_{\cal{O}_F}\cal{O}_{F,S}\overset{\sim}{\rightarrow} \mrm{M}_2(\cal{O}_{F,S})$ and  $\iota\colon R_{p_S}\otimes_{\cal{O}_F}\cal{O}_{F,S}\overset{\sim}{\rightarrow} \mrm{M}_2(p_S\cal{O}_{F,S})$.

\begin{proposition}
	For any subset $\Sigma\subseteq S$, the group $\Gamma$ acts transitively on  $\scr{V}^0_{S\setminus\Sigma}\times\scr{E}^0_{\Sigma}$.
\end{proposition}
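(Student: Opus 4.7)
The plan is to recast the transitivity claim as a strong approximation statement for the algebraic group $B^{1}/F$. First I would note that $\mrm{SL}_2(F_\frak{p})$ already acts transitively on each factor $\scr{V}^0_\frak{p}$ and $\scr{E}^0_\frak{p}$: given a $\mrm{GL}_2(F_\frak{p})$-translate $g\cdot\mbf{v}_\frak{p}$ of the standard vertex, evenness forces $\mrm{ord}_\frak{p}(\det g) = 2k$, and rescaling by the central scalar $\pi_\frak{p}^{-k}$ followed by a suitable element of the stabiliser $F_\frak{p}^\times\cdot\mrm{GL}_2(\cal{O}_{F,\frak{p}})$ of $\mbf{v}_\frak{p}$ yields an $\mrm{SL}_2$-element with the same vertex action; the analogous argument handles edges using $\mrm{Iw}(\frak{p})$. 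Hence the stabiliser of the base multivertex-edge in $\mrm{SL}_2(F_S)$ is the compact open subgroup $H_S = \prod_{\frak{p}\in S}H_\frak{p}$ with $H_\frak{p} = \mrm{SL}_2(\cal{O}_{F,\frak{p}})$ for $\frak{p}\in S\setminus\Sigma$ and $H_\frak{p} = \mrm{SL}_2(F_\frak{p})\cap\mrm{Iw}(\frak{p})$ for $\frak{p}\in\Sigma$, and the proposition reduces to the decomposition
\[
\mrm{SL}_2(F_S)\ =\ R^{\times}_1\cdot H_S,
\]
where $R^{\times}_1$ is embedded diagonally via the fixed isomorphism $\iota$.

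Next I would apply strong approximation for the simply-connected semisimple group $B^{1}/F$. Under the running hypotheses $B$ is not totally definite, so there is a place $v_0$ at which $B^{1}(F_{v_0})$ is non-compact, and Kneser's theorem yields density of $B^{1}(F)$ in the restricted product $\prod'_{v\neq v_0} B^{1}(F_v)$ taken with respect to the compact opens $R^{1}_\frak{q}\subseteq B^{1}(F_\frak{q})$ at finite primes $\frak{q}\notin S$. Given $g_S = (g_\frak{p})_\frak{p}\in\mrm{SL}_2(F_S)$, the open neighbourhood
\[
\mathcal{W} = \bigl\{\,a\ :\ a_\frak{p}\in g_\frak{p} H_\frak{p}\text{ for all }\frak{p}\in S;\ a_\frak{q}\in R^{1}_\frak{q}\text{ for all }\frak{q}\notin S\text{ non-archimedean}\,\bigr\}
\]
is non-empty, so density furnishes an element $\gamma\in\mathcal{W}\cap B^{1}(F)$. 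The conditions at primes $\frak{q}\notin S$ force $\gamma$ to be an $S$-integral element of $B$ of reduced norm $1$, hence $\gamma\in R^{\times}_1$; the conditions at primes in $S$ read $g_\frak{p} = \gamma\cdot u_\frak{p}$ with $u_\frak{p}\in H_\frak{p}$. Reducing modulo $\{\pm 1\}$ delivers the desired element of $\Gamma$ moving the base multivertex-edge to the one determined by $g_S$.

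The main difficulty I anticipate is purely bookkeeping: one must verify that the normalisation of $\iota$ fixed immediately before the proposition — matching the orders $R_{\cal{O}_F}$ and $R_{p_S}$ to $\mrm{M}_2(\cal{O}_{F,S})$ and $\mrm{M}_2(p_S\cal{O}_{F,S})$ — together with the narrow class number one hypothesis on $F$, makes the local compact opens $R^{1}_\frak{q}$ used in the strong approximation topology coincide under $\iota$ with those used in defining $H_S$, so that the $\gamma$ produced lies in $R^{\times}_1$ on the nose rather than in a merely commensurable subgroup. With that in hand the proof is a direct unravelling of strong approximation; the reduction step is formal, and the non-totally-definite hypothesis is precisely what guarantees the non-compact place $v_0$ needed to invoke Kneser's theorem.
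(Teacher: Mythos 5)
Your proof is correct, but it follows a genuinely different route from the paper's. You first observe that the image of $\mrm{SL}_2(F_\frak{p})$ acts transitively on $\scr{V}^0_\frak{p}$ and $\scr{E}^0_\frak{p}$ (a valuation-of-determinant computation), identify the stabiliser $H_S$ of the base point, and then reduce the statement to $\mrm{SL}_2(F_S)=\iota(R^\times_1)\cdot H_S$, which you obtain from Kneser's strong approximation theorem for the simply connected group $B^1$, using the non-totally-definite hypothesis to supply the non-compact place $v_0$; the bookkeeping point you flag is handled exactly as you suggest, since the local conditions at $\frak{q}\notin S$ are the localisations of the level-$\frak{n}^{\mbox{\tiny $+$}}$ order, so the approximating element lies in $R^\times_1$ on the nose. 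The paper instead works with the full unit group: strong approximation for $B^\times$ identifies $R^\times\backslash B_S^\times/(R_\Sigma)^\times_S$ with an adelic double coset space, Eichler's norm theorem (valid since $B$ is not totally definite and $R_\Sigma$ is an Eichler order) maps this onto a quotient of the narrow class group of $F$, the narrow class number one hypothesis makes it a single class, and a final descent through the exact sequences \eqref{ses1}, \eqref{ses2} transfers transitivity from $R^\times_{\mrm{ev}}/\cal{O}_F[\mbox{\small $S^{-1}$}]^\times$ down to $\Gamma$. The comparison is instructive: your argument via $B^1$ bypasses both Eichler's theorem and the class number hypothesis entirely, so it proves the transitivity in greater generality and more directly at the level of the group $\Gamma$ actually acting; the paper's argument stays within the $B^\times$/$\mrm{GL}_2$ adelic formalism it uses elsewhere (and which is also what underlies the Hecke and $U_B/U_+$ actions), at the cost of invoking the standing narrow class number one assumption and the extra injectivity step to get from $R^\times_{\mrm{ev}}$ to $\Gamma$.
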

\begin{proof}
	Let $R_\Sigma$ denote the Eichler order of level $ p_\Sigma\cdot\frak{n}^{\mbox{\tiny $+$}}$ and $R=R_\Sigma[\mbox{\small $S^{-1}$}]$. By strong approximation, the inclusion $B_S^\times\hookrightarrow \widehat{B}^\times$, of the units of the $S$-adic completion of $B$ into the units of the profinite completion, induces a bijection 
	\[\xymatrix{
	R^\times\backslash B_S^\times/(R_{\Sigma})^\times_S\ar[r]^-{1:1}& B^\times\backslash \widehat{B}^\times/\widehat{R}_\Sigma^\times.
	}\]
	As $B$ is not totally definite and $R_\Sigma$ is locally norm maximal, being an Eichler order, the reduced norm induces a bijection between $B^\times\backslash \widehat{B}^\times/\widehat{R}_\Sigma^\times$ and a quotient of the narrow class group of $F$. Therefore, under our narrow class number one assumption, the set 
	\[\xymatrix{
	R^\times\backslash B_S^\times/(R_\Sigma)_S^\times\ar[r]^-{\iota}_-\sim& R^\times\backslash \mrm{GL}_2(F_S)/\mrm{GL}_2(\cal{O}_{F,S\setminus\Sigma})\mrm{Iw}(p_{\Sigma})
	}\] consists of a single element. We deduce that also 
	\[
	(R^\times_\mrm{ev}/\cal{O}_{F}[\mbox{\small $S^{-1}$}]^\times)\backslash \mrm{PGL}_2(F_S)^0/\mrm{GL}_2(\cal{O}_{F,S\setminus\Sigma})\mrm{Iw}(p_{\Sigma})
	\]
	consists of a single element. To conclude we just need to show that the natural map
	\[
	\Gamma\backslash \mrm{PGL}_2(F_S)^0/\mrm{GL}_2(\cal{O}_{F,S\setminus\Sigma})\mrm{Iw}(p_{\Sigma})\longrightarrow (R^\times_\mrm{ev}/\cal{O}_{F}[\mbox{\small $S^{-1}$}]^\times)\backslash \mrm{PGL}_2(F_S)^0/\mrm{GL}_2(\cal{O}_{F,S\setminus\Sigma})\mrm{Iw}(p_{\Sigma})
	\] 
	is injective. This follows from the fact that the target consists of a single element and that, using (\ref{ses1}) and (\ref{ses2}), any element $g\in R^\times_\mrm{ev}$ can be written as $g=\gamma\cdot \alpha$ for some $\gamma\in \Gamma$ and $\alpha\in R_\Sigma^\times$. 
\end{proof}

\noindent For any set $X$ and abelian group $A$ we write $\cal{F}(X,A)$ for the set of functions from $X$ to $A$. If $X$ is a left $\mrm{PGL}_2(F_S)$-set, we define a left action of $\mrm{PGL}_2(F_S)$ on $\cal{F}(X,A)$ by the rule
\[
\gamma\star c(x):=c(\gamma^{-1}x).
\]

\begin{corollary}\label{ident}
	For any subset $\Sigma\subseteq S$ there are isomorphisms of $\Gamma$-modules 
	\[
	\scr{V}^0_{S\setminus\Sigma}\times\scr{E}^0_{\Sigma}\ \cong\ \Gamma/\Gamma_\Sigma
	\qquad\&\qquad
	\cal{F}\big(\scr{V}^0_{S\setminus\Sigma}\times\scr{E}^0_{\Sigma},\ A\big)\ \cong\ \mrm{CoInd}^\Gamma_{\Gamma_\Sigma}A.
	\]
\end{corollary}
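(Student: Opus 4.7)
The plan is to apply orbit–stabilizer using the transitive $\Gamma$-action established in the previous proposition, and then recognize the stabilizer as $\Gamma_\Sigma$.

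First I would fix a base point $x_0=(\mbf{v}_{S\setminus\Sigma},\mbf{e}_\Sigma)\in \scr{V}^0_{S\setminus\Sigma}\times\scr{E}^0_{\Sigma}$ whose component at each $\frak{p}\in S\setminus\Sigma$ is the distinguished vertex $\mbf{v}_\frak{p}$ fixed in the setup of $\scr{T}_\frak{p}$, and whose component at each $\frak{p}\in\Sigma$ is the distinguished edge $\mbf{e}_\frak{p}$. The previous proposition gives $\Gamma\cdot x_0=\scr{V}^0_{S\setminus\Sigma}\times\scr{E}^0_{\Sigma}$, so the orbit–stabilizer theorem produces a $\Gamma$-equivariant bijection $\Gamma/\mrm{Stab}_\Gamma(x_0)\cong \scr{V}^0_{S\setminus\Sigma}\times\scr{E}^0_{\Sigma}$. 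The first isomorphism will then follow as soon as I can identify $\mrm{Stab}_\Gamma(x_0)$ with $\Gamma_\Sigma$.

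The stabilizer identification is the one real step, and I expect it to be the main obstacle. Componentwise, the stabilizer of $\mbf{v}_\frak{p}$ in $\mrm{PGL}_2(F_\frak{p})^0$ is the image of $\mrm{GL}_2(\cal{O}_{F,\frak{p}})$, while the stabilizer of $\mbf{e}_\frak{p}$ is $\mrm{Iw}(\frak{p})$; hence the stabilizer of $x_0$ inside $\mrm{PGL}_2(F_S)^0$ is the open compact subgroup
\[
K := \prod_{\frak{p}\in S\setminus\Sigma}\mrm{GL}_2(\cal{O}_{F,\frak{p}})\ \times\ \prod_{\frak{p}\in\Sigma}\mrm{Iw}(\frak{p}).
\]
Pulling $K$ back through the fixed isomorphism $\iota$ and exploiting the compatibilities $\iota(R_{\cal{O}_F}\otimes\cal{O}_{F,S})=\mrm{M}_2(\cal{O}_{F,S})$ and $\iota(R_{p_S}\otimes\cal{O}_{F,S})=\mrm{M}_2(p_S\cal{O}_{F,S})$ recorded at the start of the section, the requirement $\iota(\gamma)\in K$ becomes the requirement that the $S$-adic components of $\gamma\in R_1^\times$ sit in the local Eichler order of level $\frak{p}$ at each $\frak{p}\in\Sigma$. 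This is exactly the condition carving out $R_{p_\Sigma\cdot\frak{n}^{\mbox{\tiny $+$}}}$ inside $R$, so $\mrm{Stab}_\Gamma(x_0)=(R_{p_\Sigma\cdot\frak{n}^{\mbox{\tiny $+$}}}^\times)_1/\{\pm 1\}=\Gamma_\Sigma$.

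The second isomorphism I would then deduce formally. Transporting functions along the first isomorphism identifies $\cal{F}(\scr{V}^0_{S\setminus\Sigma}\times\scr{E}^0_{\Sigma},A)$ with $\cal{F}(\Gamma/\Gamma_\Sigma,A)$ carrying the action $\gamma\star c(x)=c(\gamma^{-1}x)$; viewing $A$ as a trivial $\Gamma_\Sigma$-module, this is the standard presentation of $\mrm{CoInd}^\Gamma_{\Gamma_\Sigma}A$ as right-$\Gamma_\Sigma$-invariant functions on $\Gamma$ under left translation, and the actions match. The whole argument has a single piece of non-formal content, namely the Eichler-order bookkeeping identifying $\mrm{Stab}_\Gamma(x_0)$ with $\Gamma_\Sigma$; everything else is a transparent consequence of the preceding proposition together with the defining properties of $\iota$.
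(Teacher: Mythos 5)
Your proposal is correct and follows essentially the same (implicit) route as the paper: the corollary is deduced from the transitivity proposition via orbit–stabilizer, with the stabilizer of the base point $(\mbf{v}_{S\setminus\Sigma},\mbf{e}_\Sigma)$ identified with $\Gamma_\Sigma$ using the normalization of $\iota$ and the compatible choice of Eichler orders, and the second isomorphism obtained from the standard description of $\mrm{CoInd}^\Gamma_{\Gamma_\Sigma}A$ as functions on $\Gamma/\Gamma_\Sigma$ with the $\star$-action. No gaps worth flagging.
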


\subsection{Harmonic cochain-valued cohomology classes}
For $\frak{p}\in S$ and $m\in \bb{N}\cup\{\infty\}$ we denote by $\scr{V}_\frak{p}^{\le m}$ the set of vertices of the Bruhat--Tits tree $\scr{T}_\frak{p}$ at distance less than or equal to $m$ from the base vertex $\mbf{v}_{\frak{p}}$. Moreover, we let $\scr{E}_\frak{p}^{\le m}$ denote the set of oriented edges whose farthest vertex has distance at most $m$ from the base vertex $\mbf{v}_{\frak{p}}$. 
\begin{definition}
For any abelian group $A$ we consider
\[
\cal{F}_0\big(\scr{E}_\frak{p}^{\le m},A\big):=\left\{ c\in\cal{F}\big(\scr{E}_\frak{p}^{\le m},A\big)\mid c(e)+c(\overline{e})=0\quad\forall\ e\in\scr{E}_\frak{p}^{\le m}\right\}.
\]
Observe that if $A[\scr{E}_\frak{p}^{0,\le m}]$ denotes the free $A$-module on the set of even vertices $\scr{E}_\frak{p}^{0,\le m}$, then
    \[
    \cal{F}_0\big(\scr{E}_\frak{p}^{\le m},A\big)\cong \mrm{Hom}_{A\mbox{-}\mrm{mod}}\big(A[\scr{E}_\frak{p}^{0,\le m}],A\big).
    \]
  \end{definition}

\noindent We  define the complex 
\begin{equation}\xymatrix{
C^{\bfcdot}_{\frak{p},m}(A):\quad 0\ar[r]& \cal{F}_0\big(\scr{E}_\frak{p}^{\le {m+1}},A\big)\ar[r]^-{\varphi_\frak{p}}& \cal{F}\big(\scr{V}_\frak{p}^{\le m},A\big)\ar[r]&0
}\end{equation}
 where the degeneracy map is given by
\begin{equation}
\varphi_\frak{p}:\cal{F}_0(\scr{E}_\frak{p}^{\le m+1},A)\longrightarrow \cal{F}(\scr{V}_\frak{p}^{\le m}, A),\qquad \varphi_\frak{p}(c)(v)=\sum_{s(e')=v}c(e').
\end{equation}
\begin{definition}\label{def: F 0}
For an abelian group $A$, an index $m\in\bb{N}\cup\{\infty\}$ and a subset $\Sigma\subseteq S$, we define 
\[
\cal{F}_0\big(\scr{V}_{S\setminus\Sigma}^{\le m}\times\scr{E}^{\le m+1}_{\Sigma},A\big)\subseteq \cal{F}\big(\scr{V}^{\le m}_{S\setminus\Sigma}\times\scr{E}^{\le m+1}_{\Sigma},A\big)
\]
 to be the subset consisting of those functions $c$ satisfying
\[
c(v,e)+c(v,\overline{e}^\frak{p})=0\qquad \forall\ \frak{p}\in \Sigma\quad\forall\ (v,e)\in \scr{V}_{S\setminus \Sigma}\times\scr{E}_\Sigma.
\]	
\end{definition}

\noindent  For any $m\in \bb{N}$ we denote by $C^{\bfcdot}_{S,m}(\bb{Q})$ the tensor product $\bigotimes_{\frak{p}\in S}C^{\bfcdot}_{\frak{p},m}(\bb{Q})$ of chain complexes of $\bb{Q}$-vector spaces. As the sets $\scr{V}_\frak{p}^{\le m}$, $\scr{E}_\frak{p}^{\le m+1}$ are finite and $\bb{Q}$ is a field we can fix isomorphisms
	\[
	C^{i}_{S,m}(\bb{Q})\cong \bigoplus_{\Sigma\subseteq S, \lvert \Sigma\rvert=i}\cal{F}_0\big(\scr{V}_{S\setminus\Sigma}^{\le m}\times\scr{E}^{\le m+1}_{\Sigma},\bb{Q}\big) \qquad \forall\ i=0,\dots, \lvert S\rvert.
	\]

\begin{definition}
	We consider the projective limit of chain complexes
	$C^{\bfcdot}_{S,\infty}(\bb{Q}):=\underset{\leftarrow, m}{\lim}\ C^{\bfcdot}_{S,m}(\bb{Q})$
	where the transition maps are given by the natural restriction of functions.
	We define $C^{\bfcdot}_{S,\infty}(\bb{Z})$ as the subcomplex of $C^{\bfcdot}_{S,\infty}(\bb{Q})$ satisfying
	\[
	C^{i}_{S,\infty}(\bb{Z})=\bigoplus_{\Sigma\subseteq S, \lvert \Sigma\rvert=i}\cal{F}_0\big(\scr{V}_{S\setminus\Sigma}\times\scr{E}_{\Sigma},\bb{Z}\big) \qquad \forall\ i=0,\dots, \lvert S\rvert.
	\]
\end{definition}

\begin{proposition}
	The cohomology of the complex $C^{\bfcdot}_{S,\infty}(\bb{Q})$
is concentrated in the top degree, i.e., $\mrm{H}^i\big(C^{\bfcdot}_{S,\infty}(\bb{Q})\big)=0$ when $i<\lvert S\rvert$.
\end{proposition}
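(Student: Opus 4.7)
The plan is to reduce the claim to the single-tree case via the K\"unneth formula at each finite truncation level, and then to promote the vanishing to the inverse limit by a Milnor-sequence argument.

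The key local input is surjectivity of the degeneracy map on a single tree. For every $\frak{p}\in S$ and every $m\in\bb{N}$, I would first show that
\[
\varphi_\frak{p}\colon \cal{F}_0\big(\scr{E}_\frak{p}^{\le m+1},\bb{Q}\big)\longrightarrow \cal{F}\big(\scr{V}_\frak{p}^{\le m},\bb{Q}\big)
\]
is surjective. Given a target $f$, one builds a preimage $c$ inductively on distance from the base vertex $\mbf{v}_\frak{p}$: at each vertex $v$ one freely assigns $c$ on all but one of its outgoing edges, and the remaining value is forced by the constraint $\sum_{s(e')=v}c(e')=f(v)$; the alternating relation $c(e)+c(\bar e)=0$ then propagates to fix $c$ on the unique inward-pointing edge at every neighboring vertex before its own constraint is processed. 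Consequently, the two-term complex $C^{\bfcdot}_{\frak{p},m}(\bb{Q})$ has a surjective differential, hence cohomology concentrated in the degree of its $\cal{F}_0$-term, which is the top degree in the identification displayed just before the proposition.

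At each finite level $m$, the complex $C^{\bfcdot}_{S,m}(\bb{Q})$ is by construction the tensor product $\bigotimes_{\frak{p}\in S}C^{\bfcdot}_{\frak{p},m}(\bb{Q})$ of two-term complexes of finite-dimensional $\bb{Q}$-vector spaces. The K\"unneth formula therefore yields
\[
\mrm{H}^i\big(C^{\bfcdot}_{S,m}(\bb{Q})\big)\ \cong\ \bigoplus_{\sum_\frak{p} i_\frak{p}=i}\ \bigotimes_{\frak{p}\in S}\mrm{H}^{i_\frak{p}}\big(C^{\bfcdot}_{\frak{p},m}(\bb{Q})\big).
\]
For $i<\lvert S\rvert$ at least one index $i_\frak{p}$ must equal $0$, and by the previous paragraph the corresponding factor vanishes. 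Hence $\mrm{H}^i\big(C^{\bfcdot}_{S,m}(\bb{Q})\big)=0$ for every finite $m$ and every $i<\lvert S\rvert$.

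To conclude, I would pass to the inverse limit. The transition maps $C^{\bfcdot}_{S,m+1}(\bb{Q})\to C^{\bfcdot}_{S,m}(\bb{Q})$ are the componentwise surjective restriction maps, so Milnor's exact sequence
\[
0\to \varprojlim_{m}{}^{1}\mrm{H}^{i-1}\big(C^{\bfcdot}_{S,m}(\bb{Q})\big)\to \mrm{H}^i\big(C^{\bfcdot}_{S,\infty}(\bb{Q})\big)\to \varprojlim_{m}\mrm{H}^i\big(C^{\bfcdot}_{S,m}(\bb{Q})\big)\to 0
\]
forces $\mrm{H}^i\big(C^{\bfcdot}_{S,\infty}(\bb{Q})\big)=0$ for all $i<\lvert S\rvert$, since both outer terms then vanish. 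The main subtlety I expect is precisely this inverse-limit step: cohomology does not in general commute with $\varprojlim$, and one needs the restriction maps to be surjective in order to rule out a $\varprojlim^{1}$ contribution. Once the tree-level surjectivity of $\varphi_\frak{p}$ is in hand, K\"unneth does all the remaining combinatorial bookkeeping to combine the $\lvert S\rvert$ tree factors.
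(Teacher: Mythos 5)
Your overall strategy is the same as the paper's: establish vanishing at each finite level $m$ via K\"unneth together with the vanishing of $\mrm{H}^0\big(C^{\bfcdot}_{\frak{p},m}(\bb{Q})\big)$ for a single tree, then pass to the projective limit. Your explicit inductive construction of a preimage under $\varphi_\frak{p}$ is correct (and in fact works integrally); the paper simply cites this surjectivity from Greenberg's Lemma 24. The K\"unneth step is also fine, since each $C^{\bfcdot}_{\frak{p},m}(\bb{Q})$ is a two-term complex of finite-dimensional $\bb{Q}$-vector spaces.

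The one genuine slip is in the limit step, precisely the point you flagged as the main subtlety. You wrote the Milnor sequence with the cochain-convention shift, $\varprojlim^1\mrm{H}^{i-1}$, but for this complex the differential \emph{lowers} the degree: as you yourself note, the all-edges term $\cal{F}_0(\scr{E}_S^{\le m+1},\bb{Q})$ sits in top degree $\lvert S\rvert$ and the degeneracy maps remove edge coordinates. Hence the correct sequence for the tower (whose transition maps are indeed levelwise surjective, by extension by zero) reads
\[
0\longrightarrow \varprojlim_m{}^{1}\,\mrm{H}^{i+1}\big(C^{\bfcdot}_{S,m}(\bb{Q})\big)\longrightarrow \mrm{H}^i\big(C^{\bfcdot}_{S,\infty}(\bb{Q})\big)\longrightarrow \varprojlim_m\,\mrm{H}^{i}\big(C^{\bfcdot}_{S,m}(\bb{Q})\big)\longrightarrow 0.
\]
For $i<\lvert S\rvert-1$ both outer terms vanish by your finite-level computation, but for $i=\lvert S\rvert-1$ the left-hand term is $\varprojlim^1$ of the \emph{top} cohomology groups $\mrm{H}^{\lvert S\rvert}\big(C^{\bfcdot}_{S,m}(\bb{Q})\big)$, which are the level-$m$ harmonic cochains and are nonzero in general, so your assertion that ``both outer terms then vanish'' breaks down in that degree. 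The repair is immediate and is exactly the paper's one-line argument: all the groups in the tower are finite-dimensional $\bb{Q}$-vector spaces, so every inverse system in sight satisfies Mittag--Leffler and the relevant $\varprojlim^1$ vanishes (equivalently, $\varprojlim$ is exact on towers of finite-dimensional vector spaces). With that added sentence your proof is complete and essentially coincides with the paper's, the only substantive difference being that you prove the surjectivity of $\varphi_\frak{p}$ directly rather than citing it.
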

\begin{proof}
	First, note that for every $m\in \bb{N}$ the cohomology of the complex $C^{\bfcdot}_{S,m}(\bb{Q})$ is concentrated in the top degree because of Kunneth's formula and the fact that  $\mrm{H}^0\big(C^{\bfcdot}_{\frak{p},m}(\bb{Q})\big)=0$ for every $\frak{p}\in S$ and every $m\in \bb{N}$ (\cite{Greenberg}, Lemma 24). As projective limits are exact on exact sequences of finite dimensional vector spaces, the claim follows.
\end{proof}

\begin{definition}
	Let $A$ be either $\bb{Z}$ or $\bb{Q}$. The space of $A$-valued $S$-harmonic cochains on the Bruhat-Tits building $\scr{T}_S$ is the subspace of $\cal{F}_0(\scr{E}_S,A)$ given by 
	\[
	\mrm{HC}_S(A):=\mrm{H}^{\lvert S\rvert}\big(C^{\bfcdot}_{S,\infty}(A)\big).
	\]
\end{definition}

\begin{corollary}\label{corollincliso}
	The inclusion $\mrm{HC}_S(\bb{Q})\hookrightarrow\cal{F}_0(\scr{E}_S,\bb{Q})$ induces an isomorphism
	\[
	 \mrm{H}^{n+s}\big(\Gamma,\mrm{HC}_S(\bb{Q})\big)_\pi\overset{\sim}{\longrightarrow} \mrm{H}^{n+s}\big(\Gamma_S,\bb{Q}\big)_\pi
	\]
	of one dimensional eigenspaces.
\end{corollary}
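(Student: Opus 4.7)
The plan is to apply the hypercohomology machinery to the $\Gamma$-equivariant complex $C^{\bfcdot}_{S,\infty}(\bb{Q})$. Since the preceding proposition tells us its cohomology is concentrated in the single degree identified with $\mrm{HC}_S(\bb{Q})$, the ``vertical-first'' spectral sequence of the hypercohomology $\mbb{H}^{\bfcdot}(\Gamma,C^{\bfcdot}_{S,\infty}(\bb{Q}))$ degenerates at $E_2$ and identifies
\[
\mrm{H}^p(\Gamma,\mrm{HC}_S(\bb{Q}))\;\cong\;\mbb{H}^{p+|S|}(\Gamma,C^{\bfcdot}_{S,\infty}(\bb{Q})),
\]
recovering the left-hand side of the corollary when $p=n+s$.

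The bulk of the work is then to analyze the ``horizontal-first'' spectral sequence with $E_1^{p,q}=\mrm{H}^q(\Gamma,C^p_{S,\infty}(\bb{Q}))$ after $\pi$-localization. I would identify each direct summand $\cal{F}_0(\scr{V}_{S\setminus\Sigma}\times\scr{E}_\Sigma,\bb{Q})$ appearing in $C^p_{S,\infty}(\bb{Q})$ as a finite direct sum of coinduced $\Gamma$-modules. Concretely, the antisymmetry condition in the edge coordinates reduces the edge component to $\scr{E}^0_\Sigma$, while $\scr{V}_{S\setminus\Sigma}$ decomposes as a $\Gamma$-set into $2^{|S\setminus\Sigma|}$ orbits indexed by the even/odd choice $\underline{\varepsilon}\in\{0,1\}^{S\setminus\Sigma}$ at each prime of $S\setminus\Sigma$. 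Each orbit is identified, via Corollary \ref{ident}, with $\Gamma/\Gamma_\Sigma^{\underline{\varepsilon}}$, where $\Gamma_\Sigma^{\underline{\varepsilon}}$ is the $B^{\times}$-conjugate of $\Gamma_\Sigma$ by the product of Atkin--Lehner elements at the primes picked out by $\underline{\varepsilon}$. Shapiro's lemma then gives
\[
\mrm{H}^q\big(\Gamma,\cal{F}_0(\scr{V}_{S\setminus\Sigma}\times\scr{E}_\Sigma,\bb{Q})\big)\;\cong\;\bigoplus_{\underline{\varepsilon}\in\{0,1\}^{S\setminus\Sigma}}\mrm{H}^q(\Gamma_\Sigma^{\underline{\varepsilon}},\bb{Q}).
\]
Because Atkin--Lehner involutions at the primes of $S\setminus\Sigma$ act as Hecke-equivariant signs on the $\pi$-isotypic line of cohomology, Corollary \ref{dimone} implies that the $\pi$-isotypic component of every summand on the right vanishes unless $\Sigma=S$ and $q=n+s$, in which case the unique surviving summand is one-dimensional.

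The $\pi$-localized $E_1$-page therefore has a single non-zero entry at $(p,q)=(|S|,n+s)$, so the horizontal spectral sequence collapses. Comparing the two abutments yields an isomorphism of one-dimensional $\pi$-eigenspaces, and tracing through the identifications shows it is the one induced by the inclusion $\mrm{HC}_S(\bb{Q})\hookrightarrow\cal{F}_0(\scr{E}_S,\bb{Q})$, since this inclusion realizes the canonical quasi-isomorphism $\mrm{HC}_S(\bb{Q})[-|S|]\simeq C^{\bfcdot}_{S,\infty}(\bb{Q})$ that governs the vertical spectral sequence's abutment. The main technical obstacle I anticipate is verifying the Atkin--Lehner compatibility in the Shapiro decomposition: one must check that the Hecke action on $\cal{F}_0(\scr{V}_{S\setminus\Sigma}\times\scr{E}_\Sigma,\bb{Q})$ respects the orbit decomposition so that each $\mrm{H}^q(\Gamma_\Sigma^{\underline{\varepsilon}},\bb{Q})_\pi$ can be equated with $\mrm{H}^q(\Gamma_\Sigma,\bb{Q})_\pi$ and thereby shown to vanish for $\Sigma\neq S$. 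Once this compatibility is in place, the remaining spectral sequence bookkeeping is essentially formal.
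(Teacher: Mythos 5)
Your argument is correct and is essentially the paper's own proof in spectral-sequence packaging: the paper extracts the short exact sequence $0 \to \mrm{HC}_S(\bb{Q}) \to \cal{F}_0(\scr{E}_S,\bb{Q}) \to \mrm{coker}(\bb{Q}) \to 0$ from $C^{\bfcdot}_{S,\infty}(\bb{Q})$ and kills $\mrm{H}^{\bfcdot}(\Gamma,\mrm{coker}(\bb{Q}))_\pi$ by an induction on the terms of the complex, using exactly your inputs (Corollary \ref{ident} plus Shapiro's lemma giving $2^{\lvert S\setminus\Sigma\rvert}$ copies of $\mrm{H}^{\bfcdot}(\Gamma_\Sigma,\bb{Q})_\pi$, and the vanishing from Corollary \ref{dimone} for $\Sigma\neq S$), which is the elementary form of your horizontal-first degeneration. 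The only caveat is that your claim of a single nonzero $E_1$-entry is slightly stronger than Corollary \ref{dimone} warrants when $s>0$ (the $\pi$-part of $\mrm{H}^{q}(\Gamma_S,\bb{Q})$ need not vanish for all $q\neq n+s$), but since all surviving entries lie in the single column $\Sigma=S$ this does not affect the conclusion in total degree $n+s+\lvert S\rvert$.
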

\begin{proof}
	We extract  the short exact sequence 
		\[\xymatrix{
		0\ar[r]& \mrm{HC}_S(\bb{Q})\ar[r]&\cal{F}_0(\scr{E}_S,\bb{Q})\ar[r]& \mrm{coker}(\bb{Q})\ar[r]&0
		}\]
		from the complex $C^{\bfcdot}_{S,\infty}(\bb{Q})$ of $\Gamma$-modules. By Corollary $\ref{ident}$ and Shapiro's lemma 
\[\begin{split}
\mrm{H}^{\bfcdot}\big(\Gamma, \cal{F}_0(\scr{V}_{S\setminus\Sigma}\times\scr{E}_{\Sigma}, \bb{Q})\big)_\pi
&\cong
\mrm{H}^{\bfcdot}\big(\Gamma, \cal{F}(\scr{V}_{S\setminus\Sigma}\times\scr{E}^0_{\Sigma}, \bb{Q})\big)_\pi\\ &\cong\big[\mrm{H}^{\bfcdot}\big(\Gamma_\Sigma,\bb{Q}\big)_\pi\big]^{\oplus 2^{\lvert S\setminus \Sigma\rvert}}.
\end{split}\]
In particular,  the cohomology groups are trivial whenever $\Sigma\not=S$ by Corollary $\ref{dimone}$. Therefore, one sees that $\mrm{H}^{\bfcdot}(\Gamma,\mrm{coker}(\bb{Q}))_\pi=0$ using an inductive argument on the terms of the complex $C^{\bfcdot}_{S,\infty}(\bb{Q})$. The claim follows because
\[
\mrm{H}^{n+s}\big(\Gamma, \cal{F}_0(\scr{E}_{S}, \bb{Q})\big)_\pi\cong \mrm{H}^{n+s}\big(\Gamma_S,\bb{Q}\big)_\pi
\]
has dimension one by Corollary $\ref{dimone}$.
\end{proof}

\begin{lemma}\label{cohclass}
	We have $\mrm{H}^{\bfcdot}\big(\Gamma,\mrm{HC}_S(\bb{Q})\big)\cong \mrm{H}^{\bfcdot}\big(\Gamma,\mrm{HC}_S(\bb{Z})\big)\otimes\bb{Q}$.
	Hence, there is a non-torsion class  
	\[
	\mbf{c}_A\in \mrm{H}^{n+s}(\Gamma,\mrm{HC}_S(\bb{Z}))_\pi
	\]
well-defined up to torsion and up to sign.
\end{lemma}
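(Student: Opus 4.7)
The plan is to prove the comparison of cohomology groups and then deduce the existence of $\mbf{c}_A$ by a rank-one count.

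For the first assertion, the key ingredient is that the $S$-arithmetic group $\Gamma$ is of type $\mrm{FP}_\infty$ (by Borel--Serre), so that cohomology of $\Gamma$ commutes with the flat base change $-\otimes_\bb{Z}\bb{Q}$. It therefore suffices to identify $\mrm{HC}_S(\bb{Q})$ with $\mrm{HC}_S(\bb{Z}) \otimes_\bb{Z} \bb{Q}$ as $\Gamma$-modules. At each finite level $m$, the complex $C^{\bfcdot}_{S,m}(\bb{Z}) = \bigotimes_{\frak{p}\in S} C^{\bfcdot}_{\frak{p}, m}(\bb{Z})$ is a bounded complex of finitely generated free abelian groups, so universal coefficients gives $\mrm{H}^{\lvert S \rvert}(C^{\bfcdot}_{S,m}(\bb{Z}))\otimes\bb{Q} \cong \mrm{H}^{\lvert S \rvert}(C^{\bfcdot}_{S,m}(\bb{Q}))$. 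One then verifies that the transition maps (restriction of cochains) are surjective, so the resulting projective systems are Mittag--Leffler and passage to $m=\infty$ preserves the identification.

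With the isomorphism established, Corollary~\ref{corollincliso} gives $\dim_\bb{Q} \mrm{H}^{n+s}(\Gamma, \mrm{HC}_S(\bb{Q}))_\pi = 1$, so $\mrm{H}^{n+s}(\Gamma, \mrm{HC}_S(\bb{Z}))_\pi$ is a finitely generated abelian group of rank one. Any generator of its free part, unique up to the torsion subgroup and up to $\pm 1$, defines $\mbf{c}_A$.

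The main obstacle is confirming the module-level identification $\mrm{HC}_S(\bb{Z})\otimes_\bb{Z}\bb{Q} \cong \mrm{HC}_S(\bb{Q})$: finite-level compatibility with $\otimes\bb{Q}$ is immediate from flatness, but inverse limits do not in general commute with tensoring by $\bb{Q}$. Controlling the accumulation of denominators in the limit -- equivalently, showing that every rational $S$-harmonic cochain on the full building is a $\bb{Q}$-linear combination of integral ones -- is the technical heart of the argument. Once this is in place, the remainder reduces to a straightforward application of flat base change to the output of Corollary~\ref{corollincliso}.
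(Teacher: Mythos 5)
The reduction you make is where the argument breaks down. The flat base-change step itself is fine: $\Gamma$ is of type $\mrm{FP}_\infty$, so $\mrm{H}^{\bfcdot}(\Gamma,M\otimes\bb{Q})\cong\mrm{H}^{\bfcdot}(\Gamma,M)\otimes\bb{Q}$ for every $\Gamma$-module $M$. But the statement you reduce to, namely $\mrm{HC}_S(\bb{Z})\otimes_\bb{Z}\bb{Q}\cong\mrm{HC}_S(\bb{Q})$ as $\Gamma$-modules, is not just the ``technical heart'' left to be checked: it is false. Under the identification of Lemma~\ref{cocyclestodistributions}, an element of $\mrm{HC}_S(\bb{Z})\otimes\bb{Q}$ is a measure whose values have bounded denominators, whereas $\mrm{HC}_S(\bb{Q})$ contains measures with unbounded denominators. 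Already for a single prime, take $\mu=\sum_{n\ge1}2^{-n}(\delta_{x_n}-\delta_{y_n})$ with the $x_n$ pairwise distinct converging to $x$ and the $y_n$ pairwise distinct converging to $y\ne x$ in $\bb{P}^1(F_\frak{p})$: every compact open contains only finitely or cofinitely many of the $x_n$ (resp.\ $y_n$), so $\mu$ is a finitely additive $\bb{Q}$-valued measure of total mass zero, and it takes the value $2^{-n}$ for every $n$; tensoring with integral mass-zero measures at the other primes of $S$ gives a plectic example. Correspondingly, your Mittag--Leffler remark does not address the real issue: surjectivity of the transition maps controls $\varprojlim^1$, i.e.\ exactness of the inverse limit, but it does not make $\varprojlim$ commute with $-\otimes\bb{Q}$, and here the inclusion $\big(\varprojlim_m C^{\bfcdot}_{S,m}(\bb{Z})\big)\otimes\bb{Q}\subseteq\varprojlim_m\big(C^{\bfcdot}_{S,m}(\bb{Z})\otimes\bb{Q}\big)=C^{\bfcdot}_{S,\infty}(\bb{Q})$ is strict.

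The comparison therefore has to be made after taking $\Gamma$-cohomology, not at the level of coefficient modules; this is what the paper does. By Corollary~\ref{ident} the terms $\cal{F}_0(\scr{V}_{S\setminus\Sigma}\times\scr{E}_\Sigma,A)$ of $C^{\bfcdot}_{S,\infty}(A)$ are coinduced from the arithmetic subgroups $\Gamma_\Sigma$, so Shapiro's lemma identifies $\mrm{H}^{\bfcdot}\big(\Gamma,\cal{F}_0(\scr{V}_{S\setminus\Sigma}\times\scr{E}_\Sigma,A)\big)$ with $\mrm{H}^{\bfcdot}(\Gamma_\Sigma,A)^{\oplus 2^{\lvert S\setminus\Sigma\rvert}}$, and for $\Gamma_\Sigma$ (whose cohomology is finitely generated) one does have $\mrm{H}^{\bfcdot}(\Gamma_\Sigma,\bb{Z})\otimes\bb{Q}\cong\mrm{H}^{\bfcdot}(\Gamma_\Sigma,\bb{Q})$. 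One then extracts the short exact sequence $0\to\mrm{HC}_S(A)\to\cal{F}_0(\scr{E}_S,A)\to\mrm{coker}(A)\to0$ from the complex, compares the integral long exact sequence tensored with the exact functor $-\otimes\bb{Q}$ to the rational one, and concludes by induction along the terms of $C^{\bfcdot}_{S,\infty}$ and the five lemma; note that neither $\mrm{HC}_S(\bb{Z})\otimes\bb{Q}\to\mrm{HC}_S(\bb{Q})$ nor $\mrm{coker}(\bb{Z})\otimes\bb{Q}\to\mrm{coker}(\bb{Q})$ is an isomorphism of modules, only the induced maps on $\Gamma$-cohomology are. Your final step, extracting $\mbf{c}_A$ up to torsion and sign from the one-dimensionality in Corollary~\ref{corollincliso}, is fine once the cohomological comparison is in place (and it does not require finite generation of the integral cohomology group, which your phrasing implicitly assumes).
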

\begin{proof}
	By Shapiro's lemma and the universal coefficient theorem, for every $\Sigma\subseteq S$ we have
\[\xymatrix{
\mrm{H}^{\bfcdot}\big(\Gamma, \cal{F}_0(\scr{V}_{S\setminus\Sigma}\times\scr{E}_{\Sigma}, \bb{Z})\big)\otimes\bb{Q}\ar[r]^-{\sim}\ar[d]& \mrm{H}^{\bfcdot}\big(\Gamma_\Sigma,\bb{Z}\big)^{\oplus 2^{\lvert S\setminus\Sigma\rvert}}\otimes\bb{Q}\ar[d]^-{\sim}\\
\mrm{H}^{\bfcdot}\Big(\Gamma, \cal{F}_0(\scr{V}_{S\setminus\Sigma}\times\scr{E}_{\Sigma}, \bb{Q})\big)\ar[r]^-{\sim}& \mrm{H}^{\bfcdot}\big(\Gamma_\Sigma,\bb{Q}\big)^{\oplus 2^{\lvert S\setminus\Sigma\rvert}}.
}\]
Therefore, $\mrm{H}^{\bfcdot}\big(\Gamma, \cal{F}_0(\scr{V}_{S\setminus\Sigma}\times\scr{E}_{\Sigma}, \bb{Z})\big)\otimes\bb{Q}\cong \mrm{H}^{\bfcdot}\big(\Gamma, \cal{F}_0(\scr{V}_{S\setminus\Sigma}\times\scr{E}_{\Sigma}, \bb{Q})\big)$.
	As in the proof of Corollary \ref{corollincliso}, we can extract from the complex $C^{\bfcdot}_{S,\infty}(\bb{Z})$ a short exact sequence 
		\[\xymatrix{
		0\ar[r]& \mrm{HC}_S(\bb{Z})\ar[r]&\cal{F}_0(\scr{E}_S,\bb{Z})\ar[r]& \mrm{coker}(\bb{Z})\ar[r]&0.
		}\]
		By an inductive argument on the terms of the complex $C^{\bfcdot}_{S,\infty}(\bb{Z})$, we have $\mrm{H}^{\bfcdot}(\Gamma,\mrm{coker}(\bb{Z}))\otimes\bb{Q}\cong\mrm{H}^{\bfcdot}(\Gamma,\mrm{coker}(\bb{Q}))$. The claim follows.
\end{proof}

\section{The integration pairing}
 Consider the natural  $\mrm{GL}_2(F_S)$-equivariant projection
\[
\mrm{pr}: \prod_{\frak{p}\in S}\big(F_\frak{p}^2\setminus\{\underline{0}\}\big)\longrightarrow \bb{P}^1(F_S),\qquad (x,y)\mapsto [x:y].
\]
For a multiedge $e=(s,t)\in\scr{E}_S$, choose lattices $\Lambda_s=(\Lambda_{s_\frak{p}})_\frak{p}$ and $\Lambda_t=(\Lambda_{t_\frak{p}})_\frak{p}$ such that 
\[
\frak{p}\Lambda_{t_\frak{p}}\subsetneq\Lambda_{s_\frak{p}}\subsetneq\Lambda_{t_\frak{p}}\qquad  \forall\ \frak{p}\mid p. 
\] For any lattice $\Lambda\subset F_S\oplus F_S$ we let $\Lambda'=(\Lambda'_\frak{p})_\frak{p}$ be given by $\Lambda'_\frak{p}=\Lambda_\frak{p}\setminus\frak{p}\Lambda_\frak{p}$ for all $\frak{p}\mid p$. Then 
\[
U_e:=\mrm{pr}(\Lambda_s'\cap \Lambda_t')
\] 
is an open compact subset of $\bb{P}^1(F_S)$ depending only on the multiedge $e\in\scr{E}_S$. Moreover, the collection $\{U_e\}_{e}$, indexed by  $\scr{E}_S$, forms a basis of the $p$-adic topology of $\bb{P}^1(F_S)$. 
\begin{remark}
	For any $\gamma\in\mrm{PGL}_2(F_S)$ and any multiedge $e\in\scr{E}_S$ we have
	\[
	U_{\gamma\cdot e}=\mrm{pr}\left(\gamma\Lambda_s'\cap\gamma\Lambda_t'\right)=\gamma\cdot\mrm{pr}\left(\Lambda_s'\cap\Lambda_t'\right)=\gamma\cdot U_e.
	\]
\end{remark}

\begin{definition}
We denote by $\scr{M}_\plectic(\bb{P}^1(F_S),\bb{Z})$ the set of finitely additive $\bb{Z}$-valued functions $\mu$ on compact open subsets of $\bb{P}^1(F_S)$  satisfying the following property:
the value $\mu(U)$ equals zero if there exists $\frak{p}\in S$ such that
	\begin{equation}\label{measurezero sets}
	U=\bb{P}^1(F_\frak{p})\times V,\qquad\text{for}\qquad V\subseteq\bb{P}^1(F_{S\setminus\{\frak{p}\}})\ \text{compact open}.
	\end{equation}
\end{definition}

\begin{lemma}\label{cocyclestodistributions}
	There is a $\mrm{PGL}_2(F_S)$-equivariant isomorphism 
	\[
	\mrm{HC}_S(\bb{Z})\overset{\sim}{\longrightarrow} \scr{M}_\plectic(\bb{P}^1(F_S),\bb{Z}),\qquad c\mapsto\mu_c
	\] 
	where the measure $\mu_c$ is characterized by $\mu_c(U_e)=c(e)$ for any multiedge $e\in\scr{E}_S$.
\end{lemma}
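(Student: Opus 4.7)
\medskip
\noindent\textbf{Proof proposal.} The plan is to make $\mathrm{HC}_{S}(\mathbb{Z})$ explicit before writing down the isomorphism. First I would use the description of $C^{\bullet}_{S,\infty}(\mathbb{Z})$ as the (completed) tensor product of the single-prime complexes, together with the fact that everything above degree $|S|$ is zero, to identify $\mathrm{HC}_{S}(\mathbb{Z})=\mathrm{H}^{|S|}\bigl(C^{\bullet}_{S,\infty}(\mathbb{Z})\bigr)$ with the kernel of the differential going out of $\mathcal{F}_{0}(\mathscr{E}_{S},\mathbb{Z})$. Unwinding, this realises $\mathrm{HC}_{S}(\mathbb{Z})$ as the subgroup of $c\in\mathcal{F}(\mathscr{E}_{S},\mathbb{Z})$ satisfying, for every $\mathfrak{p}\in S$, the antisymmetry relation $c(e)+c(\overline{e}^{\mathfrak{p}})=0$ coming from $\mathcal{F}_{0}$, together with the vertex-harmonicity relation
\[
\sum_{s(e'_{\mathfrak{p}})=v}c\bigl(e'_{\mathfrak{p}},e^{\mathfrak{p}}\bigr)=0\qquad \forall\,v\in\mathscr{V}_{\mathfrak{p}},\ \forall\,e^{\mathfrak{p}}\in\mathscr{E}_{S\setminus\{\mathfrak{p}\}},
\]
induced by the maps $\varphi_{\mathfrak{p}}$ in each tensor factor. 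This is the familiar notion of a plectic harmonic cochain on $\mathscr{T}_{S}$.

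Next I would define $\mu_{c}$ and check plectic vanishing. Since the remark preceding the lemma notes that $\{U_{e}\}_{e\in\mathscr{E}_{S}}$ is a basis of the topology on $\mathbb{P}^{1}(F_{S})$, every compact open $U\subseteq\mathbb{P}^{1}(F_{S})$ is a finite disjoint union $U=\bigsqcup_{i}U_{e_{i}}$; I set $\mu_{c}(U):=\sum_{i}c(e_{i})$. Independence from the decomposition reduces to the single-prime refinement identity $c(e)=\sum_{e'}c(e')$ — where $e'$ runs over the edges of $\mathscr{T}_{\mathfrak{p}}$ with source $t(e)$ and $e'\neq \overline{e}$ — which follows immediately from antisymmetry and vertex-harmonicity in the $\mathfrak{p}$-coordinate (this is the standard argument of Greenberg, carried out one coordinate at a time). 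For the plectic vanishing, suppose $U=\mathbb{P}^{1}(F_{\mathfrak{p}})\times V$; choose a decomposition $V=\bigsqcup_{j}U_{e^{\mathfrak{p}}_{j}}$ into edge-sets at the primes outside $\mathfrak{p}$ and, for each $j$, use the partition $\mathbb{P}^{1}(F_{\mathfrak{p}})=\bigsqcup_{s(e_{\mathfrak{p}})=\mathbf{v}_{\mathfrak{p}}}U_{e_{\mathfrak{p}}}$ of the projective line by the outgoing edges at the base vertex. Then
\[
\mu_{c}(U)=\sum_{j}\sum_{s(e_{\mathfrak{p}})=\mathbf{v}_{\mathfrak{p}}}c\bigl(e_{\mathfrak{p}},e^{\mathfrak{p}}_{j}\bigr)=0
\]
by vertex-harmonicity at $\mathbf{v}_{\mathfrak{p}}$.

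For the inverse I would take $\mu\in\scr{M}_{\plectic}(\mathbb{P}^{1}(F_{S}),\mathbb{Z})$ and set $c_{\mu}(e):=\mu(U_{e})$. The antisymmetry $c_{\mu}(e)+c_{\mu}(\overline{e}^{\mathfrak{p}})=0$ follows because $U_{e}\sqcup U_{\overline{e}^{\mathfrak{p}}}=\mathbb{P}^{1}(F_{\mathfrak{p}})\times\prod_{\mathfrak{q}\neq\mathfrak{p}}U_{e_{\mathfrak{q}}}$ is a plectic null set; the vertex-harmonicity relation is proved identically using the partition $\bigsqcup_{s(e_{\mathfrak{p}})=v}U_{e_{\mathfrak{p}}}=\mathbb{P}^{1}(F_{\mathfrak{p}})$ and finite additivity of $\mu$. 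The two assignments are visibly mutual inverses on the basis $\{U_{e}\}$ and hence inverse to each other after extending by finite additivity. Finally, the $\mathrm{PGL}_{2}(F_{S})$-equivariance is immediate from the displayed identity $U_{\gamma\cdot e}=\gamma\cdot U_{e}$ of the preceding remark: one checks $\mu_{\gamma\star c}(U_{e})=(\gamma\star c)(e)=c(\gamma^{-1}e)=\mu_{c}(\gamma^{-1}U_{e})=(\gamma\star\mu_{c})(U_{e})$.

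The main obstacle, I expect, is not the set-theoretic bookkeeping above but the initial identification of $\mathrm{HC}_{S}(\mathbb{Z})$ with the explicit kernel: one needs to trace the Künneth-type decomposition of $C^{\bullet}_{S,\infty}(\mathbb{Z})$ and confirm that the connecting differential out of top degree is, component by component, the map $\varphi_{\mathfrak{p}}$, so that the resulting harmonicity conditions align exactly with those forcing the plectic vanishing of $\mu_{c}$. Once this dictionary is in place — and it is essentially built into the way $C^{\bullet}_{S,\infty}(\mathbb{Z})$ was assembled — the rest of the argument is a routine, inductive use of the tree-level refinement identity applied one prime at a time.
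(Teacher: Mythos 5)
Your proposal is correct and is exactly the "straightforward modification" of the standard $\lvert S\rvert=1$ dictionary that the paper's one-line proof has in mind: you identify $\mrm{HC}_S(\bb{Z})$ with functions on multiedges that are antisymmetric and vertex-harmonic in each coordinate, and then run the usual Greenberg-style refinement argument coordinate by coordinate, with the plectic vanishing condition matching harmonicity at a vertex via the partition $\bb{P}^1(F_\frak{p})=\bigsqcup_{s(e_\frak{p})=v}U_{e_\frak{p}}$. No gaps; your write-up simply supplies the details the paper leaves implicit.
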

\begin{proof}
	This is standard when $\lvert S\rvert=1$, the general case is a straightforward modification.
\end{proof}

\subsection{Multiplicative integrals}
\begin{definition}
 We denote by $\widehat{E}_\frak{p}^\times$ the free part of the $p$-adic completion of $E_\frak{p}^\times$ and consider
	\[
	\widehat{E}_{S,\otimes}^\times :=\bigotimes_{\frak{p}\in S} \widehat{E}_\frak{p}^\times
	\]
 the tensor product of finite free $\bb{Z}_p$-modules.
 \end{definition}
\noindent The multiplicative integral of a continuous function $f:\bb{P}^1(F_S)\rightarrow \widehat{E}_{S,\otimes}^\times$ with respect to a measure  $\mu\in\scr{M}_\plectic(\bb{P}^1(F_S),\bb{Z})$ is defined as
		\begin{equation}
		\mint_{\bb{P}^1(F_S)}f(t)\ \mrm{d}\mu(t)
		:=
		\underset{\scr{U}}{\lim}\ \prod_{U\in\scr{U}} f(t_U)^{\mu(U)}
		\end{equation}
		where the limit is taken over increasingly finer coverings $\scr{U}$ of $\bb{P}^1(F_S)$ by disjoint compact open subsets, and $t_U\in U$. The definition is well-posed because the target $\widehat{E}_{S,\otimes}^\times$ is $p$-adically complete.

\begin{lemma}\label{trivial integrals}
	Suppose there are continuous functions $f_\frak{p}:\bb{P}^1(F_\frak{p})\to\widehat{E}_\frak{p}^\times$ such that $f=\otimes_\frak{p}f_\frak{p}$. If at least one of the functions is constant, then 
	\[
	\mint_{\bb{P}^1(F_S)}f(t)\ \mrm{d}\mu(t)=1.
	\]
\end{lemma}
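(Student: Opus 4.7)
The plan is to reduce the Riemann-product definition of the multiplicative integral to a direct consequence of the defining property of measures in $\scr{M}_\plectic(\bb{P}^1(F_S),\bb{Z})$: vanishing on every compact open of the form $\bb{P}^1(F_\frak{p})\times V$. Say $f_{\frak{p}_0}$ is the constant function with value $c_0\in\widehat{E}_{\frak{p}_0}^\times$ for some fixed $\frak{p}_0\in S$. The key idea is that, because $f$ is a pure tensor and $f_{\frak{p}_0}$ is constant, one can choose a \emph{product} sequence of coverings whose contributions all factor through a single ``$\frak{p}_0$-marginal'' of $\mu$, which vanishes.

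Concretely, I would work with disjoint covers $\{W_k\}$ of $\bb{P}^1(F_{\frak{p}_0})$ and $\{V_j\}$ of $\bb{P}^1(F_{S\setminus\{\frak{p}_0\}})$, and take the product cover $\scr{U}=\{W_k\times V_j\}$ of $\bb{P}^1(F_S)$. For each box $W_k\times V_j$, I would select a representative $t^{k,j}=(w_k,v_j)$ where the $S\setminus\{\frak{p}_0\}$ coordinate $v_j$ is chosen \emph{independently of $k$}. Writing the tensor product $\widehat{E}_{S,\otimes}^\times$ additively for clarity and setting $g(v_j):=\bigotimes_{\frak{p}\neq \frak{p}_0}f_\frak{p}(v_{j,\frak{p}})$, one has
\[
f(t^{k,j})^{\mu(W_k\times V_j)}\ =\ \mu(W_k\times V_j)\cdot\bigl(c_0\otimes g(v_j)\bigr).
\]
Summing first over $k$ for each fixed $j$, finite additivity of $\mu$ gives $\sum_k \mu(W_k\times V_j)=\mu(\bb{P}^1(F_{\frak{p}_0})\times V_j)=0$ by the plectic property, so the entire Riemann product equals the identity of $\widehat{E}_{S,\otimes}^\times$. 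Because product covers are cofinal among disjoint compact open partitions refining the $p$-adic topology of $\bb{P}^1(F_S)$, the limit that defines the multiplicative integral is a limit of 1's and therefore equals $1$.

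The only substantive obstacle is a piece of notational bookkeeping: the tensor product $\otimes_{\frak{p}\in S}\widehat{E}_\frak{p}^\times$ is written multiplicatively even though the operation is the additive structure of the underlying $\bb{Z}_p$-modules, so one must justify carefully that $c_0$ may be pulled out of each term and that the scalar $\mu(W_k\times V_j)$ commutes past the tensor, before collapsing the sum over $k$. Once this is cleanly set up, the vanishing property built into $\scr{M}_\plectic(\bb{P}^1(F_S),\bb{Z})$ yields the conclusion in one line.
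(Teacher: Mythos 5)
Your argument is correct and is essentially the paper's own proof: both refine an arbitrary disjoint compact open covering to a product covering, use constancy of the $\frak{p}_0$-factor of $f$ to pull that factor out, and then collapse the sum over the $\frak{p}_0$-direction by finite additivity, so that the exponent becomes $\mu\bigl(\bb{P}^1(F_{\frak{p}_0})\times V\bigr)=0$ by the defining property of $\scr{M}_\plectic$. The bookkeeping point you flag (choosing sample points compatibly with the product structure and invoking well-posedness of the limit) is implicitly present in the paper's proof as well.
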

\begin{proof}
	Suppose that the $\frak{p}$-th component $f_{\frak{p}}$ is constant. Any covering of $\bb{P}^1(F_S)$ by disjoint compact open subsets can be refined to one of the form $\scr{U}=\scr{U}_{\frak{p}}\times\scr{U}^{\frak{p}}$, where $\scr{U}_{\frak{p}}$ is a covering by disjoint compact opens of $\bb{P}^1(F_{\frak{p}})$ and $\scr{U}^{\frak{p}}$ is a covering by disjoint compact opens of $\bb{P}^1(F_{S\setminus\{\frak{p}\}})$. Then ($\ref{measurezero sets}$) gives the result because
	\[
	\prod_{U\in\scr{U}^{\frak{p}}}\prod_{V\in\scr{U}_{\frak{p}}}f(t_U,t_V)^{\mu(U\times V)}=\prod_{U\in\scr{U}^{\frak{p}}}f(t_U,t_V)^{\mu(U\times \bb{P}^1(F_{\frak{p}}))}=1.
	\]
\end{proof}

\noindent We obtain $\mrm{PGL}_2(F_S)$-equivariant integration pairing
\[
\mint\colon \scr{M}_\plectic(\bb{P}^1(F_S),\bb{Z})\times \bigotimes_{\frak{p}\in S}\Big(\scr{C}(\bb{P}^1(F_\frak{p}),\widehat{E}_\frak{p}^\times)^\times/\widehat{E}_\frak{p}^\times\Big)\longrightarrow \widehat{E}_{S,\otimes}^\times,
\]
which points to the following definition of zero-cycles: we consider the $S$-adic upper half plane $\cal{H}_S=\prod_{\frak{p}\in S}\cal{H}_\frak{p}$ where $\cal{H}_\frak{p}=\bb{P}^1(E_\frak{p})\setminus \bb{P}^1(F_\frak{p})$. We define the group of \emph{plectic zero-cycles} of $\cal{H}_S$ as the subgroup of $\bb{Z}[\cal{H}_S]$ given by
	\begin{equation}
	\bb{Z}_\plectic[\cal{H}_S]:=\bigotimes_{\frak{p}\in S}\bb{Z}[\cal{H}_\frak{p}]^0,
	\end{equation}
	the tensor product of zero-cycles of degree zero for each component. The natural inclusions $\bb{Z}[\cal{H}_\frak{p}]^0\hookrightarrow \scr{C}(\bb{P}^1(F_\frak{p}),\widehat{E}_\frak{p}^\times)^\times/\widehat{E}_\frak{p}^\times$ can be combined to give a $\mrm{PGL}_2(F_S)$-equivariant homomorphism
\[\xymatrix{
\bb{Z}_\plectic[\cal{H}_S]\ar@{^{(}->}[r]&\bigotimes_{\frak{p}\in S}\Big(\scr{C}(\bb{P}^1(F_\frak{p}),\widehat{E}_\frak{p}^\times)^\times/\widehat{E}_\frak{p}^\times\Big).
}\]
Thus, the integration map
$\mint\colon \scr{M}_\plectic(\bb{P}^1(F_S),\bb{Z})\times \bb{Z}_\plectic[\cal{H}_S]\rightarrow \widehat{E}_{S,\otimes}^\times$
produces a cap product pairing 
\begin{equation}
\cap\colon \mrm{H}^{n+s}\big(\Gamma, \scr{M}_\plectic(\bb{P}^1(F_S),\bb{Z})\big)\times \mrm{H}_{n+s}\big(\Gamma,\bb{Z}_\plectic[\cal{H}_S]\big) \longrightarrow \widehat{E}_{S,\otimes}^\times.
\end{equation}

\section{Homology class}\label{Divisor-valued}
Under our running assumptions, there is an an optimal embedding $\psi$ of level $\frak{n}^{\mbox{\tiny $+$}}$, i.e. an $F$-algebra homomorphism $\psi\colon E\to B$ satisfying
$\psi(E)\cap R_{\frak{n}^{\mbox{\tiny $+$}}}=\psi(\cal{O}_E)$.
We denote by $\cal{O}_{1}^\times$ the free part of the subgroup of units $u\in \cal{O}_E^\times$ of norm $\mrm{N}_{E/F}(u)=1$. By Dirichlet's unit theorem
\[
\text{rank}_\bb{Z}\ \cal{O}_{1}^\times=n+s,
\]
and we can choose a generator $\theta$ of the homology group $\mrm{H}_{n+s}(\cal{O}_{1}^\times,\bb{Z})\cong \bb{Z}$. The units $R_{\frak{n}^{\mbox{\tiny $+$}}}^\times$ of the Eichler order act on the embedding $\psi$ by conjugation, they induce an action of $\Gamma_{\frak{n}^{\mbox{\tiny $+$}}}$ such that
\begin{equation}\label{finalidentification}
\text{Stab}_{\Gamma_{\frak{n}^{\mbox{\tiny $+$}}}}(\psi)\cong \cal{O}_{1}^\times.
\end{equation}
We denote by $\theta_\psi$ the generator of $\mrm{H}_{n+s}(\text{Stab}_{\Gamma_{\frak{n}^{\mbox{\tiny $+$}}}}(\psi),\bb{Z})$ corresponding to $\theta$ under ($\ref{finalidentification}$).
 Recall that the quaternion algebra $B/F$ is split at all the primes in $S$, therefore it acts on the $S$-adic upper half plane $\cal{H}_S$ through the fixed isomorphism $\iota:B_S^\times\overset{\sim}{\to}\mrm{GL}_2(F_S)$. The induced action of $\psi(E)^\times$ on $\cal{H}_S$ has two fixed points $\tau_\psi, \bar{\tau}_\psi\in \cal{H}_S$ which we can use to define the plectic zero-cycle 
	\[
	C_\psi:=\otimes_{\frak{p}\in S}([\tau_{\psi,\frak{p}}]-[\bar{\tau}_{\psi,\frak{p}}])\in \bb{Z}_\plectic[\cal{H}_S].
	\]
The isomorphism induced by Shapiro's lemma  $\mrm{H}_{n+s}\big(\text{Stab}_{\Gamma_{\frak{n}^{\mbox{\tiny $+$}}}}(\psi),\bb{Z}\big)\overset{\sim}{\rightarrow}\mrm{H}_{n+s}\big(\Gamma_{\frak{n}^{\mbox{\tiny $+$}}},\bb{Z}[\Gamma_{\frak{n}^{\mbox{\tiny $+$}}}\cdot C_\psi]\big)$,
can be used use to define a map
\[
\iota_\psi:\mrm{H}_{n+s}\big(\text{Stab}_{\Gamma_{\frak{n}^{\mbox{\tiny $+$}}}}(\psi),\bb{Z}\big)\longrightarrow \mrm{H}_{n+s}\big(\Gamma_{\frak{n}^{\mbox{\tiny $+$}}},\bb{Z}_\plectic[\cal{H}_S]\big).
\]
Recall that for simplicity we assumed the number field $E$ to have narrow class number one, then we are entitled to define the homology class associated to the quadratic extension  $E/F$ by
\begin{equation}
\Delta_E:=\mrm{res}\circ\iota_\psi(\theta_\psi)\ \in\ \mrm{H}_{n+s}\big(\Gamma,\bb{Z}_\plectic[\cal{H}_S]\big).
\end{equation}

\begin{definition}
	The \emph{plectic $p$-adic invariant} associated to the triple $(A_{/F},E,S)$ is given by
		\[
		\mrm{Q}_{A}:=\mbf{c}_A\cap\Delta_E\ \in\ \widehat{E}_{S,\otimes}^\times.
		\]
\end{definition}
\noindent We refer to Section \ref{conjectures} of the Introduction for a discussion of the conjectural properties of plectic invariants and their significance for the arithmetic of higher rank elliptic curves.

%
%
%

 \section{The algorithm}\label{algorithm}
For simplicity, we describe the algorithm used to compute numerical approximations of plectic $p$-adic invariants in the setting relevant for the computations. We consider $F$ a real quadratic field, $p=\frak{p}_1\cdot\frak{p}_2$ a split prime, and $A_{/F}$ a modular elliptic curve of conductor $\frak{f}_A= \frak{p}_1\cdot \frak{p}_2\cdot\frak{q}$ for some other prime ideal $\frak{q}$.  We consider an ATR extension $E/F$ where all primes dividing $\frak{f}_A$ are inert, and where one archimedean place $\infty_1$ of $F$ splits, while the other $\infty_2$ does not. Following the recipy, we let $B/F$ denote the quaternion algebra over $F$ ramified at $\{\frak{q},\infty_2\}$.  We continue to use the notations $\Gamma_S, \Gamma_{\frak{p}_1}, \Gamma_{\frak{p}_2}$ for the arithmetic subgroups of $B^\times$ of respective levels $p, \frak{p}_1, \frak{p}_2$ and $\Gamma$ for the $S$-arithmetic subgroup.

\noindent The computation of the homology class $\Delta_E$ is straightforward using the Magma routines that compute optimal embeddings $\psi$ of $\cal{O}_E$ into a maximal order of $B$: 
	  we compute a non-torsion element $u\in\mathcal{O}_E^\times$  of relative norm one, and find a fixed point $\tau\in \mathcal{H}_p$ for the action of $\psi(u)$. Then
      \[
        \Delta_E = ([\tau_{\frak{p}_1}]-[\bar\tau_{\frak{p}_1}])\otimes ([\tau_{\frak{p}_2}]-[\bar\tau_{\frak{p}_2}])\otimes\psi(u).
      \]
	  The main difficulty in computing plectic invariants consists in explicitly describing the class 
	  \[
	  \mbf{c}_A\in \mrm{H}^1(\Gamma,\mrm{HC}_S(\bb{Z}))_\pi
	  \]
	  in terms of the cohomology class $\kappa_A\in \mrm{H}^1(\Gamma_S,\bb{Z})_\pi$ associated to the elliptic curve $A_{/F}$ by modularity. Assuming momentarily the we can compute an approximation $\mbf{c}^m_A$ of $\mbf{c}_A$ for an integer $m\ge0$ we explain how to compute some logarithm of the plectic invariant.  We let $q_{\frak{p}}\in F_{\frak{p}}^\times$ denote the period of the Tate curve $A_{/F_{\frak{p}}}$ for $\frak{p}\in S$, and let $E_{S,\otimes}$ denote the tensor product $\otimes_{\frak{p}\in S}E_{\frak{p}}$ of $\bb{Q}_p$-vector spaces; then there is a homomorphism $ \log_{S}\colon\widehat{E}^\times_{S,\otimes}\rightarrow E_{S,\otimes}$ -- factoring through Tate's uniformization $\log_{S}=\log_A\circ\ \phi_{\mbox{\tiny $\mrm{Tate}$}}$ -- such that 
	 \[	  \log_{S}(\mrm{Q}_{A})=\int_{\bb{P}^1(F_S)}\bigotimes_{\frak{p}\in S} \log_{q_{\frak{p}}} \left(\frac{t_{\frak{p}}-\tau_{\frak{p}}}{t_{\frak{p}}-\bar\tau_{\frak{p}}} \right)\ \mrm{d}\mu_A(t).
	 \]
	 As the induced morphism $\log_A:\widehat{A}(E_{S})\to E_{S,\otimes}$ is injective, we do not loose information about $\phi_{\mbox{\tiny $\mrm{Tate}$}}(\mrm{Q}_{A})$ in computing the additive $p$-adic integral for $\log_{S}(\mrm{Q}_{A})$. 
Concretely, we numerically calculate the Riemann sum
      \[
        \sum_{U_e\in \scr{U}_m} \bigotimes_{\frak{p}\in S} \log_{q_{\frak{p}}} \left(\frac{t_{e,\frak{p}}-\tau_{\frak{p}}}{t_{e,\frak{p}}-\bar\tau_{\frak{p}}} \right)\cdot \mbf{c}_A^m(e),
      \]
      where $\scr{U}_m$ denotes the covering of $\mathbb{P}^1(F_S)$ given by the compact opens $\{U_e\}_{e\in \scr{E}_S^{\le m}}$, and $t_e\in U_e$.

\noindent In the rest of the section we explain how to make Corollary \ref{corollincliso} explicit.

 \paragraph{\textbf{Step 1}.} We begin by computing the cohomology class $\kappa_A\in \mrm{H}^1(\Gamma_S,\bb{Z})_\pi$. We use Magma to calculate a presentation of $\Gamma_S$ in terms of generators and relations, then the algorithmic solution to the word problem provided by a Magma routine allows us to compute some Hecke operators and diagonalize $\mrm{H}^1(\Gamma_S,\bb{Z})$ with respect to their action. We  do this iteratively until we find the $1$-dimensional subspace onto which the Hecke operators have the correct eigenvalues associated with $A_{/F}$. A generator of this subspace is $\kappa_A$. Furthermore, we can identify this class with a 1-cocycle $\kappa_A\in Z^1(\Gamma_S,\bb{Z})$ because there are no non-trivial 1-coboundaries, then $\kappa_A$ is stored in memory in terms of its values on the set of generators of $\Gamma_S$.
  
 \paragraph{\textbf{Step 2}.} 
  Shapiro's lemma gives an explicit Hecke-equivariant isomorphism
  \[ \mrm{H}^1(\Gamma_S,\bb{Z})\overset{\sim}{\longrightarrow}\mrm{H}^1(\Gamma,\cal{F}_0(\scr{E}_S,\bb{Z}))
  \]
  which associates to $\kappa_A$ a 1-cocycle $c_A\in Z^1(\Gamma, \mathcal{F}_0(\scr{E}_{S},\bb{Z}))$.
Even though the group $\Gamma$ is also finitely presented -- with generators obtained easily from the presentations of arithmetic groups -- it is not possible to store the 1-cocycle $c_A$ in a computer because $\scr{E}_{S}$ is infinite. Nevertheless, it suffices to compute an approximation of $c_A$, i.e. its restriction to $\scr{E}_{S}^{\le m} = \scr{E}_{{\frak p}_1}^{\le m}\times\scr{E}_{{\frak p}_2}^{\le m}$ for some $m\ge0$
  \[
{c}_A^m\in Z^1(\Gamma, \mathcal{F}_0(\scr{E}_{S}^{\le m},\bb{Z})).
\]
This computation relies on the algorithms of \cite[Section 4]{GM-Oc}. From a system of coset representatives for $\Gamma_{\frak{p}_1}\backslash \Gamma_1$, we construct a collection of elements in $\Gamma$ as in \cite[Definition 2.3]{GM-Oc}
\[
\{\gamma_{e_1}\}_{{e_1}\in \scr{E}_{{\frak p}_1}^{0,\le m}}\qquad\&\qquad\{\gamma_{v_1}\}_{{v_1}\in \scr{V}_{{\frak p}_1}^{\le m-1}}.
\]
 We do the same for coset representatives of $\Gamma_S\backslash \Gamma_{\frak{p}_1}$ obtaining  $\{\gamma_{e_2}\}_{{e_2}\in \scr{E}_{{\frak p}_2}^{0,\le m}}$ and $ \{\gamma_{v_2}\}_{{v_2}\in \scr{V}_{{\frak p}_2}^{\le m-1}}$.
We are now ready to compute the 1-cocycle $c_A^m$: 
given an element $g\in\Gamma$ and an even multiedge $e=(e_1,e_2)\in\scr{E}_{S}^{0,\le m}$, we first compute an element $b\in \Gamma_{\frak{p}_1}$ such that
$\gamma_{e_1}\cdot g = b\cdot\gamma_{g^{-1}(e_1)}$ -- using the algorithm of \cite[Theorem 4.1]{GM-Oc}.
 Applying the same algorithm a second time, we compute $h\in\Gamma_S$ such that $\gamma_{e_2}\cdot b = h\cdot \gamma_{b^{-1}(e_2)}$, then the 1-cocycle $c_A^m$ is explicitly given by the formula
   \[
     c_A^m(g)(e) = \kappa_A(h).
     \]

\begin{remark}
 Corollary \ref{corollincliso} ensures that $c_A\in Z^1(\Gamma,\cal{F}_0(\scr{E}_S,\bb{Z}))$ represents a  \emph{cohomology class} valued in harmonic cochains. However, the 1-cocycle $c_A$ might not belong to $Z^1(\Gamma,\mrm{HC}_S(\bb{Z}))$ in general. To better understand the failure of harmonicity, recall the homomorphism
 	\[\xymatrix{
 	\cal{F}_0\big(\scr{E}_{S}, \bb{Z}\big)\ar[rr]^-{\varphi_1\oplus\varphi_2}&& \cal{F}_0\big(\scr{V}_{\frak{p}_1}\times\scr{E}_{\frak{p}_2}, \bb{Z}\big)\oplus \cal{F}_0\big(\scr{E}_{\frak{p}_1}\times\scr{V}_{\frak{p}_2}, \bb{Z}\big)
 	}\]
 	given by 
 	\[
 	\varphi_1(c)(v_1,e_2)=\sum_{s(e_1)=v_1}c(e_1,e_2),\qquad \varphi_2(c)(e_1,v_2)=\sum_{s(e_2)=v_2}c(e_1,e_2).
 	\]
  The collection $\{\gamma_{e_1}\}_{{e_1}}$ is a radial system \cite[Definition 2.3]{GM-Oc}, hence $\varphi_1(c_A^m)=0$.  However, we might have $\varphi_2(c_A^m)\neq 0$ (even if the second collection  $\{\gamma_{e_2}\}_{{e_2}}$ is radial for the tree in the second variable) when elements of the two collections do not commute. Fortunately, as we do know  that $c_A^m$ is cohomologous to a 1-cocycle with values in harmonic cochains, we just need to find some $D\in \mathcal{F}_0(\scr{E}_{S}^{\le m},\bb{Z})$ such that the modification 
        \[
     \mbf{c}_A^m :=  c_A^m - \partial D
        \]
is harmonic. Here $\partial D$ denotes the 1-coboundary arising from $D$. 	
 \end{remark} 
 
  \paragraph{\textbf{Step 3}.}
 The final step consists in the computation of  a function $D\in \mathcal{F}_0(\scr{E}_{S}^{\le m},\bb{Z})$ such that $ \mbf{c}_A^m =  c_A^m - \partial D$ is harmonic. Since $c_A$ represents a cohomology class valued in harmonic cochains, there are two elements
    $ D_1 \in\mathcal{F}_0( \scr{V}_{\frak{p}_1}^{\leq m-1}\times\scr{E}_{\frak{p}_2}^{\leq m},\bb{Z})$ and  $D_2 \in\mathcal{F}_0( \scr{E}_{\frak{p}_1}^{\leq m}\times\scr{V}_{\frak{p}_2}^{\leq m-1},\bb{Z})$
satisfying
 \[
   \varphi_1(c_A^m)(g) = (g-1)\star D_1\qquad\&\qquad 
   \varphi_2(c_A^m)(g) =(g-1)\star D_2.
   \]
  Moreover, the function $D$ has to satisfy $\varphi_i(D)=D_i$ for $i=1,2$. First, we explain how to compute $D_1$ and $D_2$. By Definition \ref{def: F 0} it suffices to calculate their values at even edges. Since $\Gamma$ acts transitively on $\scr{V}^0_{\frak{p}_1}\times\scr{E}^0_{\frak{p}_2}$, any pair $(v_1,e_2)\in \scr{V}_{\frak{p}_1}\times\scr{E}^0_{\frak{p}_2}$ is of the form $g^{-1}(\mbf{v}_{\frak{p}_1},\mbf{e}_{\frak{p}_2})$ or $g^{-1}(\widehat{\mbf{v}}_{\frak{p}_1},\mbf{e}_{\frak{p}_2})$ for some $g\in \Gamma$. Similarly, any pair $(e_1,v_2)\in\scr{E}^0_{\frak{p}_1}\times \scr{V}_{\frak{p}_2}$ is of the form $g^{-1}(\mbf{e}_{\frak{p}_1},\mbf{v}_{\frak{p}_2})$ or $g^{-1}(\mbf{e}_{\frak{p}_1},\widehat{\mbf{v}}_{\frak{p}_2})$ for some $g\in \Gamma$. Therefore, the values of $D_1$ are computed by either
  \[
   D_1(v_1,e_2) =  [\varphi_1(c_A^m)(g) + D_1](\mbf{v}_{\frak{p}_1},\mbf{e}_{\frak{p}_2})\qquad\text{or}\qquad
   D_1(v_1,e_2) =  [\varphi_1(c_A^m)(g) + D_1](\widehat{\mbf{v}}_{\frak{p}_1},\mbf{e}_{\frak{p}_2}),
 \]
and the values of $D_2$ by either
  \[
   D_2(e_1,v_2) =  [\varphi_2(c_A^m)(g) + D_2](\mbf{e}_{\frak{p}_1},\mbf{v}_{\frak{p}_2})\qquad
\text{or}\qquad
   D_2(e_1,v_2) =  [\varphi_2(c_A^m)(g) + D_2](\mbf{e}_{\frak{p}_1},\widehat{\mbf{v}}_{\frak{p}_2}).
 \]
In other words, $D_1$ and $D_2$ are completely determined by the degenerations $\varphi_1(c_A^m)$,  $\varphi_2(c_A^m)$ -- which can be effectively computed -- and by the four quantities
 \[
D_1(\mbf{v}_{\frak{p}_1},\mbf{e}_{\frak{p}_2}),\ D_1(\widehat{\mbf{v}}_{\frak{p}_1},\mbf{e}_{\frak{p}_2}),\ D_2(\mbf{e}_{\frak{p}_1},\mbf{v}_{\frak{p}_2}),\ D_2(\mbf{e}_{\frak{p}_1},\widehat{\mbf{v}}_{\frak{p}_2}).
   \]
These four quantities are then pinned down by the fact that the pair $(D_1,D_2)$ is in the kernel of
	\[\begin{array}{ccc}
 \cal{F}_0\big(\scr{V}_{\frak{p}_1}\times\scr{E}_{\frak{p}_2}, \bb{Z}\big)\oplus \cal{F}_0\big(\scr{E}_{\frak{p}_1}\times\scr{V}_{\frak{p}_2}, \bb{Z}\big)& {\longrightarrow} &\cal{F}\big(\scr{V}_{\frak{p}_1}\times\scr{V}_{\frak{p}_2}, \bb{Z}\big)\\ (c_1,c_2) & \longmapsto & \nu_2(c_1)-\nu_1(c_2)\end{array}
 	\]
 	given by 
 	\[
 	\nu_1(c_2)(v_1,v_2)=\sum_{s(e_1)=v_1}c_2(e_1,v_2),\qquad \nu_2(c_1)(v_1,v_2)=\sum_{s(e_2)=v_2}c_1(v_1,e_2).
 	\]
 More precisely, for every $(v_1,v_2)\in \{  (\mbf{v}_{\frak{p}_1},\mbf{e}_{\frak{p}_2}),\ (\widehat{\mbf{v}}_{\frak{p}_1},\mbf{e}_{\frak{p}_2}),\ (\mbf{e}_{\frak{p}_1},\mbf{v}_{\frak{p}_2}),\ (\mbf{e}_{\frak{p}_1},\widehat{\mbf{v}}_{\frak{p}_2})\}$ the values satisfy
        \[
          [\nu_2(D_1)-\nu_1(D_2)](v_1,v_2) = 0.
        \]
        We are left to explain the computation of a function $D\in \cal{F}_0(\scr{E}_S^{\le m},\bb{Z})$ lifting $(D_1,D_2)$.
We determine $D$ by solving the system of linear equations given by
   \begin{itemize}
   	\item [$\bfcdot$] $(\varphi_1D)(v_1,e_2) = D_1(v_1,e_2)$ for every $(v_1,e_2)\in \scr{V}_{\frak{p}_1}^{0,\leq m-1}\times \scr{E}_{\frak{p}_2}^{0,\leq m}$,
		\item [$\bfcdot$]   $(\varphi_2D)(e_1,v_2) = D_2(e_1,v_2)$ for every
$(e_1,v_2)\in \scr{E}_{\frak{p}_1}^{0,\leq m}\times \scr{V}_{\frak{p}_2}^{0,\leq m-1}$.
   \end{itemize}     
The coefficient matrix of this system quickly becomes very sparse: the number of columns is $O(p^{2m})$ and there are only $p+1$ non-zero coefficients in each of the $O(p^{2m-1})$ row. In order to solve the system efficiently, we crucially use an algorithm exploiting the sparsity of the matrix to both speed up the computation and reduce the memory usage.

\section{Numerical experiments}\label{Lastsection}
Let $F$ be one of the two smallest real quadratic fields of narrow class number one, where the prime $3$ splits -- i.e. $(3)=\frak{p}_1\cdot\frak{p}_2$. We write $D_F$ for the discriminant and set $w=(1+\sqrt{D_F})/2$.  We consider $A_{/F}$  semistable elliptic curves of conductor $\frak{f}_A=\frak{p}_1\cdot\frak{p}_2\cdot\frak{q}$, and denote by $\varepsilon_\frak{q}$ the root number at $\frak{q}$. Moreover, we choose quadratic ATR extensions $E=F(\sqrt{\beta})$ where all primes divisors of $\frak{f}_A$ are inert. In particular, $r_\mrm{alg}(A/E)$ will always be even, and by choosing $S=\{\frak{p}_1,\frak{p}_2\}$ we compute the quantity $\log_S(\mrm{Q}_A)$ associated to the plectic $3$-adic invariant ``for rank two''.
 We test Conjectures \ref{algebraicity}, \ref{Kolyvagin} and \ref{lowerrank} by carrying out experiments on two classes of examples.

\subsection{Case 1}
Suppose the elliptic curve $A_{/F}$ has rank zero over $F$. Under our assumptions, we expect to generically have $r_\text{alg}(A/E)=0$ as well, and we can numerically check it for a given quadratic extension $E/F$ by adding the $F$-rank of $A$ and that of its twist $A^-$ with respect to $E/F$. When that is the case, Conjecture \ref{lowerrank} implies that the local root number $\varepsilon_{\frak{q}}$ determines whether the plectic point $\phi_{\mbox{\tiny $\mrm{Tate}$}}(\mrm{Q}_{A})$ is trivial or not. Our computations are collected in Table~\ref{table1} and support the conjecture. 

\noindent  Let $\bb{Q}_9=\bb{Q}_3(\sqrt{-1})$ be the unramified quadratic extension of $\bb{Q}_3$. The quantity $\log_S(\mrm{Q}_A)$, defined in Section \ref{Numerical evidence}, belongs to $\bb{Q}_9\otimes_{\bb{Q}_3}\bb{Q}_9$ and it is a multiple of the elementary tensor $\sqrt{-1}\otimes\sqrt{-1}$. In the last column of the table we report the scaling factor. Moreover, the column labeled ``ATR extension'' in the table reports the quantities $\beta$ such that $E=F(\sqrt{\beta})$.

\vspace{0.2cm}
\begin{table}[h]
  \begin{center}
  \begin{tabular}{llllr}
    \toprule
    $\bs{D_F}$ & \textbf{Curve} &$\bs{\varepsilon_{\frak{q}}}$& \textbf{ATR extension} & $\bs{\log_S(\mrm{Q}_A)}$\\ 
\midrule
    $13$ &  \lmfdb{13}{36} 1a2 &$+1$& $\hspace{1.9mm}-9w+8$&$2 \cdot 3^{2} + 2 \cdot 3^{3} + 2 \cdot 3^{4} + O(3^{5})$\\
          &                         &$+1$&$-12w+17$&$2 \cdot 3^{2} + 2 \cdot 3^{3} + 3^{4} + O( 3^{5})$\\
	 \noalign{\smallskip}\hline\noalign{\smallskip}
    $37$    & \lmfdb{37}{36} 1a2 &$+1$& $\hspace{1.9mm}-4w+9$& $2 \cdot 3^{2} + 3^{4} + O(3^{5})$ \\
	\noalign{\smallskip}\hline\noalign{\smallskip}
   $37$ & \lmfdb{37}{36} 1b1 && $\hspace{2mm}-4w+9,\ \hspace{1.9mm} -12w+29,\ -12w-7$ \\
&&$-1$& $-35w+17,\ -21w+62,\ -47w+29$ & $0+O(3^5)$\\
&&& $-39w+125$\\
\noalign{\smallskip}\hline\noalign{\smallskip}
   $37$    & \lmfdb{37}{36} 1c1 && $\hspace{2mm}-4w+9,\ \ \hspace{0.4mm} -12w-19,\ -12w+29$\\
&&$-1$& $-12w-7,\ \hspace{2mm} -35w+17,\ -21w+62$ &$0+O(3^5)$\\
&&& $-47w+29,\ -39w+125$\\
   \bottomrule
  \end{tabular}
  \caption{}
  \label{table1}
  	\end{center}
\end{table}
\vspace{-0.8cm}

\subsection{Case 2}
Suppose the elliptic curve $A_{/F}$ has rank one over $F$. Under our assumptions, we expect to often have  $r_\mrm{alg}(A/E)=2$. When that is the case, Conjecture \ref{Kolyvagin} implies that the quantity $\varrho_A(S)$ determines whether the plectic point $\phi_{\mbox{\tiny $\mrm{Tate}$}}(\mrm{Q}_{A})$ is trivial or not.   When $A_{/F}$ has both a $3$-adic prime of split and non-split multiplicative reduction, $\varrho_A(S)=2$ and the plectic point should be non-zero. Otherwise, $\varrho_A(S)=3$ and we should have $\phi_{\mbox{\tiny $\mrm{Tate}$}}(\mrm{Q}_{A})=0$. Our computations are collected in Table \ref{table2} and support the expectations.

\noindent The column labeled ``Difference'' in the table reports the difference $\varrho_A(S)-r_\mrm{alg}(A/E)$.

\vspace{0.2cm}
 \begin{table}[h]
   \center
   \begin{tabular}{lllcr}
    \toprule
    $\bs{D_F}$ & \textbf{Curve} & \textbf{ATR extension}& \textbf{Difference} & $\bs{\log_S(\mrm{Q}_A)}$ \\
    \midrule
     $13$ &  \lmfdb{13}{153} 2e2 & $\hspace{1.9mm}-9w+8$& $0$& $2 \cdot 3^{2} + 3^{4} + 3^{5} + O(3^{6})$\\
\noalign{\smallskip}\hline\noalign{\smallskip}
     $13$ &  \lmfdb{13}{207} 1c1 & $\hspace{1.9mm}-9w-4$& $0$& $3^{3} + 3^{4} + O(3^{5})$ \\
     && $\hspace{1.9mm}-9w+8$& $0$ &$2 \cdot 3^{2} + 2 \cdot 3^{3} + O(3^{5})$ \\
     \noalign{\smallskip}\hline\noalign{\smallskip}
      $13$ & \lmfdb{13}{225} 1b2 & $\hspace{2mm}-3w-1$ & $1$& $0+O(3^5)$\\
 	 && $-12w+17$ & $1$ &$0+O(3^5)$\\
	\noalign{\smallskip}\hline\noalign{\smallskip}
	  $37$ & \lmfdb{37}{63} 1a2 & $\hspace{1.9mm}-4w+9$& $1$ &$0+O(3^5)$\\
	 \noalign{\smallskip}\hline\noalign{\smallskip}
	   $37$ & \lmfdb{37}{63} 1b1, &$\hspace{1.9mm}-4w+9$& $1$ &$0+O(3^5)$\\
	\noalign{\smallskip}\hline\noalign{\smallskip}
     $37$ & \lmfdb{37}{63} 1d1 & $\hspace{1.9mm}-4w+9$& $0$ & $3^{2} + 3^{3} + 2 \cdot 3^{4} + O(3^{5})$\\
	 \noalign{\smallskip}\hline\noalign{\smallskip}
	 $37$& \lmfdb{37}{63} 2a1& $\hspace{1.9mm}-3w+5$& $1$ &$0+O(3^5)$\\
	 \noalign{\smallskip}\hline\noalign{\smallskip}
	  $37$& \lmfdb{37}{63} 2b1& $\hspace{4.9mm}3w+5$& $1$ &$0+O(3^5)$\\
	 \noalign{\smallskip}\hline\noalign{\smallskip}
     $37$ & \lmfdb{37}{63} 2d1 & $\hspace{1.9mm}-3w+5$& $0$ & $3^{2} + 3^{3} + O(3^{6})$\\
          && $-21w+62$& $0$&$2 \cdot 3^{2} + 3^{6} + O(3^7)$\\
          && $-32w+41$& $0$&$3^3 + 3^4 + O(3^7)$\\
		  \noalign{\smallskip}\hline\noalign{\smallskip}
     $37$ & \lmfdb{37}{99} 2c1& $-20w+29$& $0$&$3^{2} + 3^{3} + O(3^{4})$\\
          &&  $\hspace{1.9mm}-9w+14$& $0$&$3^{2} + 2 \cdot 3^{3} + O(3^{4})$\\
          && $-12w+29$& $0$&$3^{3} + O(3^4)$\\
          && $-32w+41$& $0$&$3^{3} + O(3^4)$\\
          && $-12w-7$ & $0$&$3^{2} + 2 \cdot 3^{3} + O(3^{4})$\\
          && $-35w+17$&$0$&$3^{2} + 2 \cdot 3^{3} + O(3^{4})$\\
    \bottomrule
   \end{tabular}
   \caption{}\label{table2}
\end{table}

\noindent The evidence for Conjecture \ref{algebraicity} is weaker  when plectic points are non-zero, given the low precision of our calculations. We plan to extend the methods of \cite{GM-Oc} to the setting of this paper to improve our results. Nevertheless, we computed the quantity $\log_S(\mrm{Q}_A)$ to seven $3$-adic digits of precision for two examples with encouraging results: one example is reported in Section \ref{Numerical evidence} of the Introduction, and the other is described next.
 We consider again the elliptic curve \lmfdb{37}{63} 2d1 defined over $F=\mathbb{Q}(\sqrt{37})$ of Weierstrass model
  \[
    A_{/F} \colon\ y^2 + x y + y = x^{3} + w x^{2} + \left(w + 1\right) x + 2, \qquad w = \frac{1+\sqrt{37}}2.
  \]
The elliptic curve has rank $2$ over the ATR extension $E=F\big(\sqrt{\beta}\big)$, for $\beta=-32w+41$, where all prime divisors of $\frak{f}_A$ are inert. Since $\varrho_A(S)=2$, Conjectures \ref{algebraicity} and \ref{Kolyvagin} imply that the plectic point should be non-zero and explicitly related to a generator of $\wedge^2A(E)$. We use \texttt{Magma} to compute the generators of $A(E)$: $P_1 = \big(2+w,\ -6-2w\big)$ and
  \[
     P_2 = \left(\frac{44074+ 33068w}{118943},\  \frac{13134267 - 83850648w}{2572261318} \sqrt{\beta} - \frac{163017+33068w}{237886}  \right).
  \]
Then, by setting $P_S=\det_S(P_1\wedge P_2)$, we compute that
  \[
  \log_S(\mrm{Q}_A)\equiv8\cdot \log_A(P_S) \pmod{3^7}.  
  \]

\bibliography{Plectic}
\bibliographystyle{alpha}

\end{document}